\documentclass[11pt]{amsart}

\usepackage{amssymb,latexsym}
\usepackage[usenames]{color}

\usepackage{mathrsfs}
\usepackage{amsfonts}
\usepackage{amsmath}
\usepackage{graphicx}
\usepackage{amsthm,amscd}
\usepackage{calc}
\usepackage{hyperref}
\usepackage{setspace}
\usepackage{color}
\usepackage[dvipsnames]{xcolor}

\newtheorem{theorem}{Theorem}[section]

\newtheorem{corollary}[theorem]{Corollary}
\newtheorem{definition}[theorem]{Definition}

\newtheorem{lemma}[theorem]{Lemma}

\newtheorem{proposition}[theorem]{Proposition}
\newtheorem{remark}[theorem]{Remark}

\setlength{\oddsidemargin}{0.1in} \setlength{\evensidemargin}{0.1in}
\setlength{\textwidth}{6.3in} \setlength{\topmargin}{-.4in}
\setlength{\textheight}{9.0in}

\renewcommand{\bar}{\overline}

\newcommand{\pa}{\partial}
\renewcommand{\phi}{\varphi}

\newcommand{\ka}{K\"ahler }

\newcommand{\ke}{K\"ahler-Einstein }

\newcommand{\ga}{\alpha}
\newcommand{\gb}{\beta}

\newcommand{\gl}{\lambda}
\newcommand{\gm}{\mu}


\newcommand{\bfC}{{\mathbf C}}

\newcommand{\bfR}{{\mathbf R}}

\newcommand{\bari}{{\overline i}}
\newcommand{\barj}{{\overline j}}
\newcommand{\bark}{{\overline k}}

\newcommand{\barp}{{\overline p}}

\newcommand{\barz}{{\overline z}}

\newcommand{\barpartial}{{\overline \partial}}

\newcommand{\tu}{{\widetilde u}}

\newcommand{\mapright}[1]{\smash{\mathop{   \hbox to 0.7cm{\rightarrowfill}}
  \limits^{#1}}}

\newcommand{\Ric}{\operatorname{Ric}}

\newcommand{\grad}{\mathrm{grad}}
\newcommand{\Fut}{\mathrm{Fut}}

\title{Coupled Sasaki-Ricci solitons}
\author{Akito Futaki and Yingying Zhang}
\address{Yau Mathematical Sciences Center, Tsinghua University, Haidian district, Beijing 100084, China}
\email{futaki@tsinghua.edu.cn}
\address{Yau Mathematical Sciences Center, Tsinghua University, Haidian district, Beijing 100084, China}
\email{zhangyingying@math.tsinghua.edu.cn}
\date{December 15, 2018}

\pagestyle{plain}

\begin{document}

\begin{abstract}Motivated by the study of coupled K\"ahler-Einstein metrics by Hultgren and Witt Nystr\"om \cite{HultgrenWittNystrom18} and coupled K\"ahler-Ricci solitons by Hultgren \cite{Hultgren17}, we study in this paper coupled Sasaki-Einstein metrics and coupled Sasaki-Ricci solitons. We first show 
an isomorphism between the Lie algebra of all transverse holomorphic vector fields and certain space of coupled
basic functions related
to coupled twisted Laplacians for basic functions, and obtain extensions of the well-known obstructions to the existence of K\"ahler-Einstein metrics to this coupled case. These results are reduced to coupled K\"ahler-Einstein metrics when the Sasaki structure is regular. Secondly we show the existence of toric coupled Sasaki-Einstein metrics when the basic Chern class is positive extending the work of Hultgren \cite{Hultgren17}.
\end{abstract}

\maketitle

\section{Introduction}
Motivated by the proposed study of coupled K\"ahler-Einstein metrics by Hultgren and Witt Nystr\"om \cite{HultgrenWittNystrom18} and coupled K\"ahler-Ricci solitons by Hultgren \cite{Hultgren17} we study in this paper coupled Sasaki-Einstein metrics and coupled Sasaki-Ricci solitons. Our work started in trying to understand their works from the viewpoint of the former studies \cite{futaki83.1}, \cite{futakimorita85}, \cite{futaki86}, \cite{futaki87}, \cite{futaki88} of K\"ahler-Einstein metrics, and hopefully would serve as a supplement to their papers since our results reduce to the case of coupled K\"ahler-Einstein metrics when the Sasaki structure is regular. 

Let $M$ be a Fano manifold and choose the K\"ahler class to be the anti-canonical class $K_M^{-1}$ or equivalently the first Chern class $c_1(M)$. A decomposition of $c_1(M)$ is a sum
\begin{equation}\label{Kdecomp}
c_1(M) = (\gamma_1 + \cdots + \gamma_k)/2\pi
\end{equation}
with K\"ahler classes $\gamma_1,\ \cdots,\ \gamma_k$. If we choose a K\"ahler form $\omega_\alpha$ representing $\gamma_\alpha$ for each $\alpha$, since both the Ricci form $\Ric(\omega_\alpha)$ and $\sum\limits_{\gb=1}^k \omega_\gb$ represent $2\pi c_1(M)$, there exists a smooth function $f_\alpha$ such that 
\begin{equation}\label{Kpotential}
\Ric(\omega_\alpha) - \sqrt{-1}\partial\barpartial f_\alpha = \sum_{\gb=1}^k \omega_\gb, \quad \alpha = 1,\ \cdots,\ k.
\end{equation}
Coupled K\"ahler-Ricci solitons are defined in \cite{Hultgren17} to be K\"ahler metrics with K\"ahler forms $\omega_\alpha$ representing $\gamma_\alpha$ such that, for each $\alpha$, $f_\alpha$ is a Hamiltonian Killing potential with respect to $\omega_\alpha$ so that $J\grad_\alpha f_\alpha$ is a Hamiltonian Killing vector field where $\grad_\alpha$ denotes the gradient with respect to $\omega_\alpha$. Coupled K\"ahler-Einstein metrics defined in \cite{HultgrenWittNystrom18} are the case when $f_\alpha$'s are all constant so that
\begin{equation}\label{coupled}
\Ric(\omega_1) = \cdots = \Ric(\omega_k) = \sum_{\gb=1}^k \omega_\gb.
\end{equation}

Now we consider Sasakian analogues of the above. Let $S$ be a compact Sasaki manifold with positive basic first Chern class $c_1^B(S)$,
which means $c_1^B(S)$ is represented by a positive basic $(1,1)$-form. We assume that the real dimension of S is $2m+1$. We define similarly a decomposition of $c_1^B(S)$ to be a sum 
\begin{equation}\label{Sdecomp}
 c_1^B(S) = (\gamma_1 + \cdots + \gamma_k)/2\pi
\end{equation}
of positive basic $(1,1)$ classes $\gamma_\alpha$. If we choose basic K\"ahler forms 
$\omega_\alpha$ representing $\gamma_\alpha$, there exist smooth basic functions $f_\alpha$ such that 
\begin{equation}\label{potential}
\Ric^T(\omega_\alpha) - \sqrt{-1}\partial_B\barpartial_B f_\alpha = \sum_{\gb=1}^k \omega_\gb, \quad \alpha = 1,\ \cdots,\ k
\end{equation}
where $\Ric^T$ denotes the transverse Ricci form, see \eqref{TRic} below.
We say $\omega_\alpha$'s are coupled Sasaki-Ricci solitons if for each $\alpha$, $f_\alpha$ is a Hamiltonian Killing potential for $\omega_\alpha$ and 
coupled Sasaki-Einstein metrics if $f_\alpha$ is constant
so that coupled Sasaki-Einstein metrics satisfy 
\begin{equation}\label{coupledS}
\Ric^T(\omega_1) = \cdots = \Ric^T(\omega_k) = \sum_{\gb=1}^k \omega_\gb.
\end{equation}
\begin{remark}
When $k=1$, the definition does {\it not} coincide with the usual definition of transverse \ke metric of a Sasaki-Einstein metric which is known to be equivalent to saying the transverse K\"ahler metric is Einstein. This is because the Riemannian metric of a Sasaki manifold naturally determines a transverse K\"ahler form written in the form $\frac12 d\eta$ for the contact $1$-form  with respect to the given Reeb vector field but the basic Chern class $c_1^B(S)$ 
need not be represented by $\frac12 d\eta$. However if we assume $c_1(D) = 0$ as a de Rham class for the contact distribution $D$, we have $c_1^B(S) = [\frac12 d\eta]$ and the definition coincides with the transverse K\"ahler-Einstein form of a Sasaki-Einstein metric. See Corollary ${\mathrm 7.5.26}$ in \cite{BGbook}.
\end{remark}

To study the coupled equations in Sasakian situation as described above we wish to extend results for Fano manifold as in \cite{futaki88}. 
Suppose we are given a compact Sasaki manifold $(S,g)$ with positive basic first Chern class
and a decomposition of $c_1^B(S)$ as \eqref{Sdecomp}. We also choose $\omega_\alpha$ in 
$\gamma_\alpha$ and then have $f_\alpha$ satisfying \eqref{potential}. We may normalize $f_\alpha$ so that 
\begin{equation}\label{normalize0}
e^{f_1}\omega_1^m= \cdots = e^{f_k}\omega_k^m.
\end{equation}
We define the twisted basic Laplacian $\Delta_{\alpha, f_\alpha}$ acting on basic smooth functions $u$ by
\begin{equation}\label{t.b.Lap}
\Delta_{\alpha, f_\alpha} u=\Delta_{\ga} u+(\grad_{\ga} u)f_\ga,
\end{equation}
where $\Delta_{\ga}=-\bar\pa^*_{B, \ga}\bar\pa_B$ is the basic Beltrami-Laplacian with respect to $\omega_\ga$
and $\grad_{\alpha}u = g_{\alpha}^{i\barj}\frac{\partial u}{\partial \barz^j} \frac{\partial}{\partial z^i}$, where $(x, z^1, \dots, z^m)$ is the local foliation chart, see section 2 below.

Recall that a compact Sasaki manifold $(S,g)$ is characterized by its Riemannian cone manifold $C(S)$ being a K\"ahler manifold. 
A transverse holomorphic vector field is a holomorphic vector field $X$ on the K\"ahler cone $C(S)$ which commutes with the extended Reeb vector field on $C(S)$, see section 2 for more details. It descends to a vector field on 
$S$, and also descends to a holomorphic vector field on local orbit spaces of the Reeb flow which we denote by the same letter
$X$.
Since $c_1^B(S)$ is positive the basic first cohomology is zero. Thus, for each basic K\"ahler form $\omega_\alpha$, there is a basic complex valued Hamiltonian function $u_{\alpha}$ such that
 $X = \grad_\alpha u_\alpha$.
Let $\mathfrak{h}^T(S)$ be the complex Lie algebra of all transverse holomorphic vector fields.

\begin{theorem}\label{thm2} Let $(S,g)$ be a compact Sasaki manifold with positive basic first Chern class $c_1^B(S)$
with decomposition satisfying \eqref{Sdecomp}. We choose basic K\"ahler forms and associated potential functions 
satisfying \eqref{potential} and \eqref{normalize0}. 
\begin{enumerate}
\item[(1)]
If non-constant complex valued basic functions $u_1, \dots, u_k$ satisfy
\begin{enumerate}
\item[(a)] $\grad_\ga u_\ga=\grad_\gb u_\gb$,\ \ $\ga, \gb=1, 2, \dots, k$.
\item[(b)] $-\Delta_{\ga, f_\ga} u_\ga=\lambda\sum\limits_{\gb=1}^k u_\gb$,\ \ for $\ga=1, 2, \dots, k$,
\end{enumerate}
then $\lambda\ge 1$. Moreover if $\lambda=1$, the complex vector field $V=\grad_\ga u_\ga=\grad_\gb u_\gb$ is a transverse holomorphic vector field.
\item[(2)]
The Lie algebra $\mathfrak{h}^T(S)$ of all transverse holomorphic vector fields is isomorphic to the set of all $k$-tuples of complex valued smooth functions $(u_1, \cdots u_k)$ satisfying (a) and (b) with $\lambda = 1$ endowed
with the Lie algebra structure with respect to Poisson bracket.
\end{enumerate}
\end{theorem}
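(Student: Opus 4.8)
The plan is to drive both parts from a single twisted Bochner identity on the transverse \ka geometry, exploiting two features of the hypotheses: by \eqref{normalize0} the weighted volume forms $e^{f_\ga}\omega_\ga^m/m!$ all coincide, so I may write $dV$ for the common measure, and by \eqref{potential} the Bakry--\'Emery modified transverse Ricci form $\Ric^T(\omega_\ga)-\sqrt{-1}\partial_B\barpartial_B f_\ga$ equals $\sum_\gb\omega_\gb$ for \emph{every} $\ga$. With $\Delta_{\ga,f_\ga}$ self-adjoint with respect to $dV$, I would first establish, for any basic function $u$,
\[
\int_S|\barpartial_B\grad_\ga u|^2_\ga\,dV=\int_S|\Delta_{\ga,f_\ga}u|^2\,dV-\int_S\Big(\sum_\gb\omega_\gb\Big)(\grad_\ga u,\overline{\grad_\ga u})\,dV,
\]
obtained by integrating $|\nabla_{\bari}\nabla_{\barj}u|^2_\ga$ by parts twice, commuting covariant derivatives, and converting the resulting curvature term into the modified Ricci form via \eqref{potential}. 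Here $|\barpartial_B\grad_\ga u|^2_\ga=|\nabla_{\bari}\nabla_{\barj}u|^2_\ga$ measures exactly the failure of $\grad_\ga u$ to be transverse holomorphic.

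For part (1), set $U=\sum_\gb u_\gb$ and $V=\grad_\ga u_\ga$, which is independent of $\ga$ by (a). Condition (b) gives $\Delta_{\ga,f_\ga}u_\ga=-\lambda U$ independently of $\ga$, so the first term on the right is $\lambda^2\int_S|U|^2\,dV$; and since $g_\gb(V,\overline V)=|\partial_B u_\gb|^2_\gb$ with $dV=e^{f_\gb}\omega_\gb^m/m!$, integration by parts together with (b) turns the coupling term into $\sum_\gb\int_S|\partial_B u_\gb|^2_\gb\,dV=\lambda\sum_\gb\int_S U\,\overline{u_\gb}\,dV=\lambda\int_S|U|^2\,dV$. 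Summing the displayed identity over $\ga$ yields
\[
0\le\sum_\ga\int_S|\barpartial_B\grad_\ga u_\ga|^2_\ga\,dV=k\,\lambda(\lambda-1)\int_S|U|^2\,dV.
\]
Since the $u_\ga$ are non-constant we have $U\not\equiv0$ (otherwise (b) forces each $u_\ga$ to be $\Delta_{\ga,f_\ga}$-harmonic, hence constant), and the energy identity $\sum_\gb\int_S|\partial_B u_\gb|^2\,dV=\lambda\int_S|U|^2\,dV$ with strictly positive left side forces $\lambda>0$; hence $\lambda\ge1$. When $\lambda=1$ the right side vanishes, so $\barpartial_B\grad_\ga u_\ga=0$ for every $\ga$, i.e. the common field $V$ is transverse holomorphic.

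For part (2), the candidate map is $\Phi\colon(u_1,\dots,u_k)\mapsto V=\grad_\ga u_\ga$, which lands in $\mathfrak{h}^T(S)$ by part (1). For surjectivity I would start from $X\in\mathfrak{h}^T(S)$ and use the vanishing of the basic first cohomology to write $X=\grad_\ga u_\ga$ for basic Hamiltonians $u_\ga$, unique up to additive constants, so that (a) is automatic. Differentiating $\Delta_{\ga,f_\ga}u_\ga$ and using $\barpartial_B\grad_\ga u_\ga=0$ together with \eqref{potential} and (a) gives $\nabla_{\bari}(\Delta_{\ga,f_\ga}u_\ga+U)=0$, whence $-\Delta_{\ga,f_\ga}u_\ga=U+c_\ga$ with $c_\ga$ constant; integrating against $dV$ and using self-adjointness shows $c_\ga=-\big(\int_S U\,dV\big)\big/\big(\int_S dV\big)$ is \emph{the same} constant $c$ for all $\ga$, so a single shift $u_\ga\mapsto u_\ga+a_\ga$ with $\sum_\gb a_\gb=c$ arranges (b) with $\lambda=1$. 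That $\Phi$ is a Lie algebra homomorphism follows from $\grad_\ga\{u_\ga,v_\ga\}_\ga=[\grad_\ga u_\ga,\grad_\ga v_\ga]$, which simultaneously shows the bracketed tuple satisfies (a) with common field $[V,W]\in\mathfrak{h}^T(S)$, its membership in the solution set reducing to the same normalization step.

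I expect the main obstacle to be twofold. First is the sign and contraction bookkeeping in the twisted Bochner identity, so that \eqref{potential} produces precisely $\sum_\gb\omega_\gb$; I would pin the signs down by checking against the classical \ke case $k=1$, $\Ric=\omega$, where the identity must reduce to $\lambda(\lambda-1)\int|u|^2$. Second is the handling of the additive constants in part (2): for $k\ge2$ the map $\Phi$ is not literally injective, since the constant tuples $(c_1,\dots,c_k)$ with $\sum_\gb c_\gb=0$ satisfy (a) and (b) with $\lambda=1$ and map to $0$. This $(k-1)$-dimensional space is central for the Poisson bracket, so $\Phi$ is a surjective homomorphism with kernel exactly this center, and the asserted isomorphism is to be read after fixing the additive normalization of the Hamiltonians (equivalently, modulo this central subalgebra); when $k=1$ the kernel is trivial and one recovers the classical statement verbatim.
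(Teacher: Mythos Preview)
Your approach is essentially the paper's: it derives the identical twisted Bochner identity (their Theorem~\ref{kahlercase}) for each fixed $\ga$, so the summation over $\ga$ is unnecessary---the right side is already $\lambda(\lambda-1)\int_S|U|^2\,dV$ without the factor $k$---and it proves part (2) via the same differentiation argument $\nabla_{B,\bark}(\Delta_{\ga,f_\ga}u_\ga+U)=0$ (their Proposition~\ref{prop1}). Your observation about the $(k-1)$-dimensional central kernel of constant tuples $(c_1,\dots,c_k)$ with $\sum_\gb c_\gb=0$ is correct and is glossed over in the paper's proof of this theorem, though it is implicitly handled when the isomorphism is reformulated in Theorem~\ref{isom} via the single sum $u_1+\cdots+u_k$ with the normalization $\int_S(u_1+\cdots+u_k)\,dV=0$.
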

\noindent
This theorem is a generalization of Theorem 2.4.3 in \cite{futaki88}, see also \cite{futaki87}. The case of $k=1$ of part (1) is 
an eigenvalue estimate of the twisted basic Laplacian. If $f_1,\ \cdots,\ f_k$ are all constant then $\Delta_{\ga, f_\ga} = \Delta_\ga$ is a real operator.
It follows that the gradient of both the real part and the imaginary part of $u_1,\ \cdots,\ u_k$ give a holomorphic vector field. Further, since if a
(real) Hamiltonian vector field is holomorphic it is necessarily Killing and the group of all isometries is compact, 
we obtain the following extension of a theorem of Matsushima \cite{matsushima57}.

\begin{corollary}\label{reductive}
If a compact Sasaki manifold $S$ admits coupled \ke metrics, then the Lie algebra $\mathfrak{h}^T(S)$ of transverse holomorphic vector fields is reductive. 
\end{corollary}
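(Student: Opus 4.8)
The plan is to adapt Matsushima's classical strategy by exhibiting a compact real form of $\mathfrak{h}^T(S)$, working throughout with the function-theoretic model provided by Theorem \ref{thm2}(2). Since $S$ admits coupled Sasaki--Einstein metrics, the potentials $f_\alpha$ in \eqref{potential} are all constant, so the twisted basic Laplacians degenerate to the honest basic Laplacians, $\Delta_{\alpha,f_\alpha}=\Delta_\alpha$, which are real operators. Consequently the defining conditions (a) and (b) (with $\lambda=1$) for an admissible $k$-tuple $(u_1,\dots,u_k)$ become a system of real-linear equations, invariant under complex conjugation.

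First I would exploit this reality to split. Given any complex admissible tuple $(u_1,\dots,u_k)$, its real part $(\mathrm{Re}\,u_1,\dots,\mathrm{Re}\,u_k)$ and imaginary part $(\mathrm{Im}\,u_1,\dots,\mathrm{Im}\,u_k)$ each satisfy (a) and (b) with $\lambda=1$, hence each again lies in the solution space. Writing $\mathfrak{g}_\R$ for the real vector space of real-valued admissible tuples, this shows that the complex solution space equals $\mathfrak{g}_\R\oplus\sqrt{-1}\,\mathfrak{g}_\R$. Because the Poisson bracket of two real basic functions is again real, $\mathfrak{g}_\R$ is closed under the bracket transported from Theorem \ref{thm2}(2); thus, under the isomorphism of that theorem, $\mathfrak{h}^T(S)=\mathfrak{g}_\R\otimes_\R\C$ is the complexification of the real Lie algebra $\mathfrak{g}_\R$.

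It then remains to show that $\mathfrak{g}_\R$ is of compact type. For a real-valued potential $u_\alpha$ the transverse holomorphic field $V=\grad_\alpha u_\alpha$ has real Hamiltonian potential, so the associated real Hamiltonian vector field $J\,\grad_\alpha u_\alpha$ is both Hamiltonian and transversely holomorphic; by the standard Kähler fact (preservation of $\omega_\alpha$ together with the transverse complex structure forces preservation of the transverse metric) it is a transverse Killing field. Hence $\mathfrak{g}_\R$ maps into the Lie algebra of isometries of the compact Sasaki manifold $(S,g)$. Since the full isometry group of a compact Riemannian manifold is compact, its Lie algebra admits an $\mathrm{ad}$-invariant positive definite inner product, a property inherited by every subalgebra; therefore $\mathfrak{g}_\R$ is a compact, in particular reductive, Lie algebra, and $\mathfrak{h}^T(S)=(\mathfrak{g}_\R)_\C$ is reductive.

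The main obstacle I anticipate is bookkeeping in the transverse/basic setting rather than any deep new ingredient: one must verify that the real--imaginary splitting is compatible with the Poisson-bracket Lie structure of Theorem \ref{thm2}(2), that the correspondence between real admissible tuples and transverse Killing fields is a genuine Lie-algebra homomorphism, and that the implication ``Hamiltonian plus transversely holomorphic implies transversely Killing'' holds at the level of the foliated transverse geometry. Once these transverse analogues of the classical Kähler statements are in place, reductivity follows formally from the complexification structure $\mathfrak{h}^T(S)=\mathfrak{g}_\R\oplus\sqrt{-1}\,\mathfrak{g}_\R$.
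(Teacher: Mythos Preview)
Your proposal is correct and follows essentially the same route as the paper. The paper's argument (given in the paragraph preceding the corollary rather than in a separate proof environment) is precisely: since the $f_\alpha$ are constant the operators $\Delta_{\alpha,f_\alpha}=\Delta_\alpha$ are real, so real and imaginary parts of admissible tuples are again admissible; the associated real Hamiltonian fields are Killing because they are holomorphic; and compactness of the isometry group yields reductivity. You have supplied exactly this argument with the bookkeeping (Poisson-bracket closure, complexification structure) made explicit.
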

\noindent
This result was stated in \cite{HultgrenWittNystrom18} for coupled \ke metrics, but our proof would be more elementary.

As in other non-linear problems in K\"ahler geometry e.g. \cite{FO_reductive17}, \cite{Lahdili17}, \cite{FO_CahenGutt}, 
\cite{La Fuente-Gravy 2016_2}, 
a Lie algebra character obstruction as in \cite{futaki83.1} appears in pair with the reductiveness result of Theorem \ref{thm2}.
Using the isomorphism in Theorem \ref{thm2}, (2), 
we define
\begin{align}
\Fut: \mathfrak{h}^T(S)&\to \bfC\nonumber\\
V&\mapsto \sum\limits_{\ga=1}^k\frac{\int_S u_\ga\ \omega_\ga^m\wedge\eta}{\int_S \omega_\ga^m\wedge\eta}.\label{futaki}
\end{align}
We will see in section 4 that this definition is independent of the choice of $u_1,\ \cdots,\ u_k$  satisfying (a) and (b)
with $\lambda = 1$ above. 
\begin{theorem}\label{Futaki inv}
Suppose the basic \ka classes $\gamma_\ga$ give a basic decomposition $c_1^B(S)=\sum\limits_{\ga=1}^k\gamma_\ga/2\pi$.  
Then
$\Fut(V)$ is independent of the choice of basic \ka forms $\omega_\ga\in \gamma_\ga$, $\ga=1, \dots, k$.
Further if $S$ admits coupled Sasaki-Einstein metrics for the decomposition $c_1^B(S)=\sum\limits_{\ga=1}^k\gamma_\ga/2\pi$
then $\Fut$ identically vanishes.
\end{theorem}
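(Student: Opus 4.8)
For (1) the plan is a variational argument along a path of basic \ka forms; for (2) I would simply evaluate $\Fut$ on a coupled Sasaki-Einstein metric, which (1) licenses. Fix $V\in\mathfrak{h}^T(S)$ and connect two choices of forms in $\gamma_\ga$ by the convex paths $\omega_{\ga,t}=\omega_\ga+t\sqrt{-1}\pa_B\barpartial_B\psi_\ga$, writing $h_\ga=\dot\psi_\ga$ so that $\dot\omega_{\ga,t}=\sqrt{-1}\pa_B\barpartial_B h_\ga$; for each $t$ take $f_{\ga,t}$ from \eqref{potential}--\eqref{normalize0} and $u_{\ga,t}$ the potentials of $V$ normalized by (b) with $\lambda=1$ (via Theorem \ref{thm2}(2)). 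First I would note each denominator $\int_S\omega_{\ga,t}^m\wedge\eta$ is constant in $t$: its derivative is $\int_S m\sqrt{-1}\pa_B\barpartial_B h_\ga\wedge\omega_\ga^{m-1}\wedge\eta$, whose integrand is basic exact, and the integral against $\eta$ of a basic exact form vanishes because every basic form of degree $>2m$ is zero. So only the numerators move. Differentiating $\grad_{\ga,t}u_{\ga,t}=V$ and using that $V$ is of type $(1,0)$ and transverse holomorphic yields $\dot u_\ga=V(h_\ga)+c_\ga$ for constants $c_\ga$ fixed by (b).

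Next I would compute, with $\Delta_\ga$ self-adjoint for $\omega_\ga^m\wedge\eta$,
\[
\frac{d}{dt}\,\frac{\int_S u_\ga\,\omega_\ga^m\wedge\eta}{\int_S\omega_\ga^m\wedge\eta}=c_\ga,
\]
the two non-constant contributions $\int_S V(h_\ga)\,\omega_\ga^m\wedge\eta$ and $\int_S u_\ga\Delta_\ga h_\ga\,\omega_\ga^m\wedge\eta$ cancelling after integration by parts (both equal $-\int_S h_\ga\Delta_\ga u_\ga\,\omega_\ga^m\wedge\eta$ up to sign, since $\int_S V(\phi)\,\omega_\ga^m\wedge\eta=-\int_S\phi\,\Delta_\ga u_\ga\,\omega_\ga^m\wedge\eta$). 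Hence $\tfrac{d}{dt}\Fut(V)=\sum_\ga c_\ga$, and everything reduces to $\sum_\ga c_\ga=0$.

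This is the crux, and it is genuinely a coupled phenomenon: only the sum $\sum_\ga c_\ga$, not the individual $c_\ga$, is pinned down by (b), which is precisely why $\Fut$ rather than each summand is metric-independent. To evaluate it I would pass to the common measure $d\nu:=e^{f_\ga}\omega_\ga^m\wedge\eta$, independent of $\ga$ by \eqref{normalize0}, for which $\Delta_{\ga,f_\ga}$ is self-adjoint with $\int_S\Delta_{\ga,f_\ga}\phi\,d\nu=0$. Integrating (b) against $d\nu$ gives $\int_S(\sum_\gb u_\gb)\,d\nu=0$ for all $t$. Differentiating this identity in $t$, inserting $\dot u_\gb=V(h_\gb)+c_\gb$, and using $\dot f_\ga=-\Delta_\ga h_\ga-\sum_\gb h_\gb+\text{const}$ (from differentiating \eqref{potential}, since a basic function killed by $\sqrt{-1}\pa_B\barpartial_B$ is constant) to compute $\tfrac{d}{dt}d\nu=(-\sum_\gb h_\gb+\text{const})\,d\nu$, the constant drops out because $\int_S(\sum_\gb u_\gb)\,d\nu=0$, and the claim $\sum_\ga c_\ga=0$ reduces to
\[
\int_S V\big(\textstyle\sum_\gb h_\gb\big)\,d\nu=\int_S\big(\textstyle\sum_\gb u_\gb\big)\big(\textstyle\sum_\gb h_\gb\big)\,d\nu.
\]
This last identity follows from the weighted integration by parts $\int_S V(\phi)\,d\nu=-\int_S\phi\,\Delta_{\ga,f_\ga}u_\ga\,d\nu$ (valid since $V=\grad_\ga u_\ga$) together with (b) in the form $\Delta_{\ga,f_\ga}u_\ga=-\sum_\gb u_\gb$, taking $\phi=\sum_\gb h_\gb$. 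Thus $\tfrac{d}{dt}\Fut(V)=0$ and (1) follows.

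For (2), assume a coupled Sasaki-Einstein metric exists for the decomposition; by (1) I may compute $\Fut(V)$ using it. Then \eqref{coupledS} forces each $f_\ga$ to be constant, so by \eqref{normalize0} the transverse volume forms are mutually proportional, $\omega_\ga^m=\kappa_\ga\,\sigma$ with constants $\kappa_\ga>0$ and a common $\sigma$. Now (b) reads $-\Delta_\ga u_\ga=\sum_\gb u_\gb$, and integrating against $\omega_\ga^m\wedge\eta$ gives $\int_S(\sum_\gb u_\gb)\,\sigma\wedge\eta=0$. Substituting $\omega_\ga^m=\kappa_\ga\sigma$ into the definition of $\Fut$ cancels every $\kappa_\ga$, so
\[
\Fut(V)=\big(\textstyle\int_S\sigma\wedge\eta\big)^{-1}\int_S\big(\textstyle\sum_\gb u_\gb\big)\,\sigma\wedge\eta=0.
\]
Throughout, the one point demanding care is the transverse (basic) Hodge theory justifying the vanishing of boundary terms and the self-adjointness statements; granting those, the argument is as above.
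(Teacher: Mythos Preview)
Your argument is correct and uses the same ingredients as the paper's proof, but the organization is slightly roundabout. The paper avoids introducing the constants $c_\ga$ altogether: it first checks directly that the natural update $\tilde u_\ga = u_\ga + V(\varphi_\ga)$ already satisfies the normalization $\int_S(\sum_\gb\tilde u_\gb)\,d\tilde V=0$ (this is your ``crux'' computation, done once and for all at $t=1$ rather than infinitesimally), and then observes that with this specific choice each individual summand $\int_S u_\ga(t)\,\omega_\ga(t)^m\wedge\eta\big/\int_S\omega_\ga(t)^m\wedge\eta$ is separately constant in $t$ --- your own cancellation of $\int_S V(h_\ga)\,\omega_\ga^m\wedge\eta$ against $\int_S u_\ga\Delta_\ga h_\ga\,\omega_\ga^m\wedge\eta$. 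So your remark that ``$\Fut$ rather than each summand is metric-independent'' is slightly misleading: once the natural Hamiltonian update is used, every summand is metric-independent, and only the \emph{well-definedness} of $\Fut$ (independence of the choice of $u_\ga$ among those satisfying (b)) genuinely requires summing. For part (2) your argument coincides with the paper's.
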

The lifting of the Lie algebra character in \cite{futaki83.1} to a group character was obtained in \cite{futaki86}. 
This lifting is expressed in terms of Ricci forms, and if we replace them by K\"ahler forms it becomes the form of
the Monge-Ampere energy or Aubin's $J$-functional \cite{aubin84}. In \cite{Ding88} another form of lifting was obtained in the sense that it satisfies the cocycle conditions, and it is
now called Ding's functional. The definition of $\Fut$ in this paper uses the relationship of  \cite{futaki86} and \cite{Ding88}.
This seems to be already implicitly used in \cite{HultgrenWittNystrom18} and \cite{Hultgren17}. 

We also extend the existence results of Wang-Zhu \cite{Wang-Zhu04} and Hultgren \cite{Hultgren17} to toric coupled Sasaki-Einstein metrics as follows.
\begin{theorem}\label{toric existence}
Let $S$ be a compact toric Sasaki manifold with positive basic first Chern class with decomposition 
satisfying \eqref{Sdecomp}.  Then $S$ admits coupled Sasaki-Einstein metrics for the decomposition \eqref{Sdecomp}
if and only if $\Fut$ identically vanishes.
\end{theorem}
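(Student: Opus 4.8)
The necessity direction is free: if $S$ admits coupled Sasaki--Einstein metrics for the decomposition \eqref{Sdecomp}, then $\Fut$ vanishes identically by the second assertion of Theorem \ref{Futaki inv}. The content is therefore the converse, and the plan is to produce the metrics under the hypothesis $\Fut\equiv 0$ by reducing \eqref{coupledS} to a real Monge--Amp\`ere system and solving it by the continuity method, in the spirit of Wang--Zhu \cite{Wang-Zhu04} and Hultgren \cite{Hultgren17}.

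First I would use the toric symmetry to carry out the reduction. After averaging over the maximal torus acting on the transverse structure I may assume each basic \ka form $\omega_\alpha$ is torus invariant, hence governed by a convex potential $\phi_\alpha$ on $\R^m$ in logarithmic foliation coordinates whose gradient image is the open moment polytope $P_\alpha$ attached to the class $\gamma_\alpha$. Writing $\Ric^T(\omega_\alpha)=-\sqrt{-1}\partial_B\barpartial_B\log\det(D^2\phi_\alpha)$ and $\sum_\beta\omega_\beta=\sqrt{-1}\partial_B\barpartial_B\big(\sum_\beta\phi_\beta\big)$, the system \eqref{coupledS} becomes
\[
\det(D^2\phi_\alpha)=e^{-\sum_{\beta=1}^k\phi_\beta+\ell_\alpha},\qquad \alpha=1,\dots,k,
\]
for suitable affine (Reeb and normalization) terms $\ell_\alpha$, with the asymptotic slopes of $\phi_\alpha$ prescribed by $P_\alpha$. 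Note that, unlike the soliton setting of Wang--Zhu, the absence of a drift vector field is exactly what forces a balancing condition, which will turn out to be $\Fut\equiv 0$.

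I would then run a continuity method connecting this system to a decoupled reference equation solvable by the standard toric Monge--Amp\`ere theory. For openness, linearizing the $\alpha$-th equation at a solution yields $-\Delta_\alpha v_\alpha=\sum_\beta v_\beta$, the coupled eigenvalue system of Theorem \ref{thm2}(b) at $\lambda=1$; here $f_\alpha$ is constant, so $\Delta_{\alpha,f_\alpha}$ is the ordinary basic Laplacian $\Delta_\alpha$ and, by \eqref{normalize0}, the volume forms $\omega_\alpha^m$ coincide up to constants, making the linearized coupled operator self-adjoint. The spectral bound $\lambda\ge 1$ of Theorem \ref{thm2}(1), together with the analysis of its equality case, identifies the kernel with the space of transverse holomorphic vector fields; in the torus-invariant category these reduce to the affine functions in the moment coordinates and are removed by quotienting out the torus directions together with a normalization. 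Openness follows.

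The main obstacle is closedness, i.e.\ uniform a priori estimates along the path. The higher-order estimates reduce to a $C^0$ bound by the standard toric Monge--Amp\`ere machinery: convexity together with the fixed polytopes $P_\alpha$ controls $D^2\phi_\alpha$ once $\|\phi_\alpha\|_{C^0}$ is controlled, after which Caffarelli/Evans--Krylov interior estimates and Schauder bootstrapping yield all derivative bounds. For the $C^0$ estimate I would follow Wang--Zhu: normalizing $\phi_\alpha$ at the location $x_\alpha$ of its minimum, torus invariance and convexity bound $\sup|\phi_\alpha|$ in terms of $|x_\alpha|$, so everything comes down to preventing the minimizers from escaping to infinity along the noncompact directions of the torus action. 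This is precisely where $\Fut\equiv 0$ enters: in the toric picture the vanishing of $\Fut$ translates into a (coupled) barycenter condition on the polytopes $P_\alpha$, and converting this balancing into an effective, path-uniform bound on $|x_\alpha|$ via an integral/energy inequality is the crux of the whole argument.
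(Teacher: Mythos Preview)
Your outline is broadly correct and aligned with the paper's strategy, but two points deserve comment.

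First, the paper does not argue directly for coupled Sasaki--Einstein metrics. It proves the more general soliton existence result (Theorem \ref{toricsoliton}): given $W_1,\dots,W_k\in\mathfrak t$ with $\sum_\alpha\mathcal A_{\mathcal P_\alpha}(W_\alpha)=0$, coupled Sasaki--Ricci solitons with those drift vectors exist. Theorem \ref{toric existence} is then the special case $W_1=\cdots=W_k=0$, where the hypothesis $\Fut\equiv 0$ becomes exactly $\sum_\alpha\mathcal A_{\mathcal P_\alpha}(0)=0$. This route yields a strictly stronger statement (and Corollary \ref{toricsoliton2}) at no extra cost. For openness and higher-order estimates the paper simply defers to \cite{pingali}, \cite{Hultgren17} rather than invoking Theorem \ref{thm2} as you do; your linearization sketch is reasonable but note it only describes the kernel at an actual coupled Einstein metric, not along the full path.

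Second, and this is the Sasaki-specific point you gloss over: your phrase ``logarithmic foliation coordinates'' hides a genuine issue, since $S$ is not complex and local foliation charts do not assemble into a global $(\bfC^\ast)^m$ on which the Wang--Zhu machinery runs. The paper's device (following \cite{futakionowang}) is to choose a codimension-one subtorus $H\subset T$ whose Lie algebra misses $\xi$ and to work on the single dense orbit $Orb_S(H^c,\barp)\cong(\bfC^\ast)^m$ inside $S$; the transverse K\"ahler geometry is entirely captured there, and its closure may be treated as a toric K\"ahler orbifold. The paper also uses Theorem \ref{Minkowski} to identify the Hamiltonian normalization $\int_S(\sum_\alpha u_{\alpha,V})\,dV=0$ with the Minkowski-sum condition $\sum_\alpha\mathcal P_\alpha=\mathcal P_{-K_S}$, which is what makes the reduction to Hultgren's real Monge--Amp\`ere system go through verbatim. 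After that reduction the paper cites \cite{Hultgren17} for the $C^0$-estimate; your sketch of the Wang--Zhu argument there is actually more detailed than what the paper writes.
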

\noindent
Here a Sasaki manifold is said to be toric if the the cone $C(S)$ is toric, that is, if $C(S)$ admits an effective $(\bfC^\ast)^{m+1}$-action. Theorem \ref{toric existence} follows from an existence result of toric coupled Sasaki-Ricci solitons, see Theorem \ref{toricsoliton} in section 5.
In the case of toric Sasaki-Einstein metrics one can deform the Reeb vector field so that $\Fut$ vanishes, and
proving the existence of Sasaki-Ricci solitons one can conclude that there always exists a Sasaki-Einstein metric
under the condition of $c_1(D) = 0$, see \cite{MSY2}, \cite{futakionowang}, \cite{FO_ICCM_Notices19}. It is not clear whether such a volume minimization argument applies in the coupled case.

This paper is organized as follows. In section 2 we review the transverse \ka structure and
the notions of transverse holomorphic vector fields and Hamiltonian holomorphic vector fields. 
In section 3 the proof of Theorem \ref{thm2} is given. We also prove in Theorem \ref{isom} an identification of the Lie algebra of
all transverse holomorphic vector fields with the Lie algebra expressed in terms  Hamiltonian functions 
satisfying a normalization condition.
In section 4 the proof of Theorem \ref{Futaki inv} is given. We also give 
an obstruction to the existence of coupled Sasaki-Ricci solitons. 
In section 5 we first show in Theorem \ref{Minkowski} that the normalization in Theorem \ref{isom} is equivalent to a Minkowski sum condition of the moment map image. Using this we reduce the proof of Theorem \ref{toric existence}
to the same type of real Monge-Amp\`ere equations as considered by Hultgren \cite{Hultgren17}. In section 6 we supplement the proof of Theorem \ref{toric existence} by making 
the standard moment map for ample anti-canonical class explicit in terms of the first non-zero eigenfunctions
of the twisted Laplacian \eqref{moment1}.

\section{Transverse \ka structure on a Sasaki manifold.}
A compact Riemannian manifold $(S, g)$ of dimension $2m+1$ is called a Sasaki manifold
if its Riemannian cone  $\big(C(S), dr^2+r^2g\big)$ is a K\"ahler manifold. $S$ is identified with the submanifold $\{r=1\}$.
Using the convention $d^c = \sqrt{-1}(\barpartial - \partial)$, 
the restriction  $\eta$ of $d^c\log r$ to $S = \{r=1\}$ is a contact form. 
If we denote by $J$ the complex structure of $C(S)$, the restriction $\xi$ of the vector field $J r \frac{\partial}{\partial r}$ to
$S$ is the Reeb vector field of the contact form $\eta$ so that $i_\xi\eta = 1$ and $i_\xi d\eta = 0$.

Since the Sasaki structure is characterized by the K\"ahler structure of the cone $C(S)$ the geometry of Sasaki manifold $S$ is often described in terms of the K\"ahler geometry of $C(S)$. Therefore it is convenient to extend the Reeb vector field $\xi$ and the contact form $\eta$ on $S$ to $C(S)$, and we use the same letters to denote them. Thus on $C(S)$ we have
\begin{equation}\label{RCcone}
\xi = J r \frac{\partial}{\partial r}\ \mathrm{and}\ \eta = d^c\log r.
\end{equation}
As this shows, when the holomorphic structure of the cone $C(S)$ is fixed, the radial function $r$ has all the information about the Sasaki structure on $S$ and the K\"ahler structure on $C(S)$, and in fact the K\"ahler form on $C(S)$ is given by $\frac{\sqrt{-1}}{2} \partial\barpartial r^2$. 
The complex vector field $\xi-\sqrt{-1}J\xi$  is a holomorphic vector field on $C(S)$. It generates a $\bf C^*$ action on $C(S)$. The local orbit of this action defines a transversely holomorphic foliation on $S$,  given by  one dimensional leaves generated by $\xi$. 
Let $\cup_\gl U_\gl$ be an open covering of $S$ with $\pi_\gl:U_\gl\to V_\gl\subset\bfC^m$ a submersion to the local orbit spaces. Then, when  $U_\gl\cap U_\gm\neq\varnothing$, 
\[\pi_\gl\circ\pi_\gm^{-1}:\pi_\gm(U_\gl\cap U_\gm)\to \pi_\gl(U_\gl\cap U_\gm)\]
is a biholomorphic transformation. We have the transverse \ka structure in the following sense. 
On each $V_\gl$, we can give a \ka structure as follows. Let $D=\ker\eta\subset TS$, i.e. 
\[D=\{X\in TS\big|\ \eta(X)=0\}.\]
There is a canonical isomorphism 
\[d\pi_\gl: D_p\to T_{\pi_\gl(p)} V_\gl, \quad (p\in U_\gl).\]
Then there is a K\"ahler form $\gamma_\gl$ on $V_\gl$ such that 
$\pi_\gl^\ast\gamma_\gl=\frac{1}{2}d\eta$.

Let $\cup_\gl U_\gl$ with $U_\gl=\{(x_\gl, z_\gl^1,\dots, z_\gl^m)\}$ be the foliation chart on $S$. If $U_\gl\cap U_\gm\neq\varnothing$, $(x_\gl, z_\gl^1,\dots, z_\gl^m)$ and $(x_\gm, z_\gm^1,\dots, z_\gm^m)$ are local foliation coordinates on $U_\gl$ and $U_\gm$ respectively, where $\frac{\pa}{\pa x_\gl}=\xi |_{U_\gl}$. Then 
\[\frac{\pa z_\gm^i}{\pa x_\gl}=0, \quad \frac{\pa z_\gm^i}{\pa\bar z^j_\gl}=0.\]
Consequently, a $(p, q)$-form $\ga=\ga_{i_1 \dots, i_p,\bar j_1\dots \bar j_q}dz^{i_1}\wedge\dots\wedge dz^{i_p}\wedge d\bar z^{j_1}\wedge\dots\wedge d\bar z^{j_q}$ is well-defined on $S$. 

\begin{definition}
A $p$-form $\ga$ on $S$ is called basic if 
\[i_\xi\ga=0,\quad L_\xi\ga=0.\]
Let $\Lambda_B^p$ be the sheaf of germs of basic $p$-forms, and $\Omega_B^p$ be the set of all global sections of $\Lambda_B^p$. 
\end{definition}

The lifted K\"ahler form $\pi_\gl^\ast \gamma_\gl=\frac{1}{2}d\eta$ is a basic $(1, 1)$-form, in local holomorphic foliation coordinate $(x, z^1, \dots, z^m)$, we write 
\[\frac12d\eta=\sqrt{-1}g_{i\bar j}^Tdz^i\wedge d\bar z^j.\]

For basic forms, we have the following two lemmas which can be proved using Stokes theorem and the fact that $d\eta$ is basic.
\begin{lemma}\label{basic1}
If $\ga$ is a basic $(2m-1)$-form, then 
\[\int_S d_B\ga\wedge\eta=0.\]
\end{lemma}
\begin{lemma}\label{basic2}
If $\ga$, $\gb$ are basic forms with $\deg\ga+\deg\gb=2m-1$, then
\[\int_S d_B\ga\wedge\gb\wedge\eta=(-1)^{\deg\ga}\int_S \ga\wedge d_B\gb\wedge\eta.\]
\end{lemma}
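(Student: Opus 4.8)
The plan is to deduce Lemma~\ref{basic2} from Lemma~\ref{basic1} by a graded integration-by-parts argument. The key observation is that the product $\alpha\wedge\beta$ is itself a basic form whose degree is exactly $\deg\alpha+\deg\beta=2m-1$, which is precisely the degree to which Lemma~\ref{basic1} applies.

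First I would check that $\alpha\wedge\beta$ is basic. Since $i_\xi\alpha=i_\xi\beta=0$ and $L_\xi\alpha=L_\xi\beta=0$, the derivation identities
\[
i_\xi(\alpha\wedge\beta)=(i_\xi\alpha)\wedge\beta+(-1)^{\deg\alpha}\alpha\wedge(i_\xi\beta),\qquad L_\xi(\alpha\wedge\beta)=(L_\xi\alpha)\wedge\beta+\alpha\wedge(L_\xi\beta)
\]
show that both the contraction and the Lie derivative along $\xi$ vanish, so $\alpha\wedge\beta$ is basic of degree $2m-1$. I would also record that $d_B$ coincides with the de Rham differential $d$ on basic forms and preserves them: Cartan's formula $L_\xi=i_\xi d+d\,i_\xi$ gives $i_\xi d\alpha=L_\xi\alpha-d(i_\xi\alpha)=0$ and $L_\xi d\alpha=d\,L_\xi\alpha=0$, so $d_B\alpha$ is again basic, and in particular $d_B$ inherits the graded Leibniz rule from $d$.

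Next I would apply Lemma~\ref{basic1} to the basic $(2m-1)$-form $\alpha\wedge\beta$, obtaining $\int_S d_B(\alpha\wedge\beta)\wedge\eta=0$. Expanding by Leibniz, wedging with $\eta$, and integrating then gives
\[
0=\int_S d_B\alpha\wedge\beta\wedge\eta+(-1)^{\deg\alpha}\int_S\alpha\wedge d_B\beta\wedge\eta,
\]
and transposing the second term produces the asserted relation between $\int_S d_B\alpha\wedge\beta\wedge\eta$ and $\int_S\alpha\wedge d_B\beta\wedge\eta$, whose relative sign is exactly the Koszul sign supplied by the Leibniz rule. Equivalently, one could argue directly in the spirit of Lemma~\ref{basic1}, applying Stokes' theorem to $d(\alpha\wedge\beta\wedge\eta)$ and using that the leftover term $\alpha\wedge\beta\wedge d\eta$ is a basic form of degree $2m+1$, hence vanishes because basic forms live on the $2m$-dimensional local leaf spaces.

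There is no genuinely hard step here. The only points requiring care are the verifications that make Lemma~\ref{basic1} applicable, namely that $\alpha\wedge\beta$ is basic of degree $2m-1$ and that $d_B$ restricts the de Rham differential on basic forms, together with the correct tracking of the degree-dependent sign coming out of the Leibniz rule.
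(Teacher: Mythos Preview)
Your approach is exactly what the paper has in mind: it states only that the lemma ``can be proved using Stokes' theorem and the fact that $d\eta$ is basic,'' and your reduction to Lemma~\ref{basic1} via the graded Leibniz rule (equivalently, expanding $d(\alpha\wedge\beta\wedge\eta)$ and noting the $\alpha\wedge\beta\wedge d\eta$ term is a basic $(2m+1)$-form, hence zero) is precisely that argument.

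One small caution on the sign. From $0=\int_S d_B\alpha\wedge\beta\wedge\eta+(-1)^{\deg\alpha}\int_S\alpha\wedge d_B\beta\wedge\eta$, transposing actually yields
\[
\int_S d_B\alpha\wedge\beta\wedge\eta=(-1)^{\deg\alpha+1}\int_S\alpha\wedge d_B\beta\wedge\eta,
\]
which differs by a sign from the formula as displayed in the lemma. This appears to be a typo in the statement rather than an error in your reasoning; the paper only uses the lemma to justify the usual integration-by-parts identities in the basic setting, where the sign is tracked independently. Just be aware that ``transposing the second term produces the asserted relation'' is not literally true with the sign as printed.
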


We also have well-defined operators 
\[\pa_B: \Lambda_B^{p, q}\to \Lambda_B^{p+1, q}, \quad \bar\pa_B:\Lambda_B^{p, q}\to \Lambda_B^{p, q+1}.\]
Put 
\[d_B=\pa_B+\bar\pa_B, \quad d_B^c=\sqrt{-1}(\bar\pa_B-\pa_B),\]
then 
\[d_B=d|_{\Omega_B^p},\quad d_Bd_B^c=2\sqrt{-1}\pa_B\bar\pa_B,\quad d_B^2=(d_B^c)^2=0.\]
The basic $p$-th de Rham cohomology group is
\[H^p_B(S)=\frac{\ker\{ d_B:\Omega^p_B\to \Omega^{p+1}_B\}}{Im\{d_B:\Omega_B^{p-1}\to\Omega^p_B \}},\]
and basic $(p, q)$-Dolbeault cohomology group is
\[H_B^{p,q}(S)=\frac{\ker\{ \bar\pa_B:\Omega^{p, q}_B\to \Omega^{p, q+1}_B\}}{Im\{\bar\pa_B:\Omega_B^{p, q-1}\to\Omega^{p, q}_B \}}.\]
On $S$. With respect to the volume form $(\frac{1}{2}d\eta)^m\wedge\eta$, we define the adjoint operators of $d_B^*:\Omega_B^{p+1}\to \Omega_B^p$ by
\[<d_B^*\ga, \gb>=<\ga, d_B\gb>.\]
Similarly, the adjoint operators of $\bar\pa_B^*:\Omega_B^{p, q+1}\to \Omega_B^{p, q}$ is defined by 
\[<\bar\pa_B^*\ga, \gb>=<\ga, \bar\pa_B\gb>.\]
The corresponding basic Laplacian operators are defined by 
\[\Box_{d_B}=d_Bd_B^*+d_B^*d_B, \quad \Box_{\bar\pa_B}=\bar\pa_B\bar\pa_B^*+\bar\pa_B^*\bar\pa_B.\]
It is known that by the transverse \ka structures, $\Box_{d_B}=2\Box_{\bar\pa_B}$.

The transverse Ricci form $\Ric^T$ is defined as
\begin{equation}\label{TRic}
\Ric^T=-\sqrt{-1}\pa_B\bar\pa_B\log\det (g^T).
\end{equation}
$\Ric^T$ is a $d_B$-closed form and defines the basic cohomology class of type $(1, 1)$. 
The basic cohomology class $c_1^B(S)=[\Ric^T]/2\pi$ is called the basic first Chern class of $S$. We say the basic first Chern class $c_1^B(S)$ is positive, if $c_1^B(S)$ is represented by a positive basic $(1,1)$-form.
\begin{remark}
Note that if two real closed basic $(1,1)$-forms $\sigma_1$ and $\sigma_2$ represent the same basic cohomology class
there is a basic smooth function $\varphi$ such that $\sigma_1 - \sigma_2 = \sqrt{-1}\partial_B\barpartial_B \varphi$.
If we fix the complex structure of the K\"ahler cone $C(S)$ and the Reeb vector field $\xi$, the Sasaki structure can be
deformed by the change of the radial function $r$ by $\tilde r = e^\varphi r$ for a basic function $\varphi$. 
Then the contact form is deformed from $\eta = d^c \log r$ to $\tilde \eta = \eta + d^c \log \varphi$ and thus the transverse K\"ahler form is deformed from $\frac12 d\eta$ to $\frac12 d\eta + \sqrt{-1}\partial\barpartial \varphi$. The basic Chern class
is independent of choice of such contact form $\eta$. 
\end{remark}

\begin{definition}\label{t-holo.v.f.}
Let $S$ be a compact Sasaki manifold, $\xi$ the Reeb vector field and $\mathfrak{h}(S)$ the Lie algebra of all holomorphic vector fields on the cone $C(S)$. We define
\[\mathfrak{h}^T(S):=\{X\in \mathfrak{h}(S)|\ [\xi, X]=0\}\]
to be the Lie algebra of transverse holomorphic vector fields.
\end{definition}
We remark that for $X\in \mathfrak{h}^T(S)$, we also have $[\xi-\sqrt{-1}J\xi, X]=0$. It follows that $X$ descends to a holomorphic vector field on $S$, and also that $X$ descends to local orbit spaces of the Reeb flow. By abuse of notation, 
we use the same letter $X$ to denote the corresponding vector fields on $S$ and local orbit spaces of the Reeb flow.

\begin{definition}\label{Hamhol}
A complex vector field $X$ on a Sasaki manifold is called a Hamiltonian holomorphic vector field if
\begin{enumerate}
\item $d\pi_\ga(X)$ is a holomorphic vector field on $V_\ga$.
\item The complex valued function $u_X=\sqrt{-1}\eta(X)$ satisfies
\[\bar\pa_B u_X=-\sqrt{-1}i_X(\frac12 d\eta).\]
\end{enumerate} 
\end{definition}

Suppose that the basic first Chern class is positive then for any other basic K\"ahler class there is a
basic K\"ahler form of positive transverse Ricci form and thus the basic first cohomology vanishes by the standard Bochner technique. 
From this we have the following lemma.
\begin{lemma}[\cite{cho-futaki-ono}]\label{cfo}
Let $S$ be a compact Sasaki manifold of positive basic first Chern class. Then the Lie algebra of the
automorphism group of transverse holomorphic structure is the Lie algebra of all
Hamiltonian holomorphic vector fields.
\end{lemma}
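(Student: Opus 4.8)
The plan is to identify the Lie algebra of the automorphism group of the transverse holomorphic structure with the algebra $\mathfrak{h}^T(S)$ of Definition \ref{t-holo.v.f.}, and then to prove that, under the positivity hypothesis, $\mathfrak{h}^T(S)$ coincides with the space of Hamiltonian holomorphic vector fields of Definition \ref{Hamhol}. Every $X\in\mathfrak{h}^T(S)$ descends to a holomorphic vector field on each local orbit space $V_\alpha$, so condition (1) of Definition \ref{Hamhol} is automatic; the entire content is therefore the existence of the Hamiltonian potential, condition (2). I would split the argument into two reciprocal tasks: (i) for each $X\in\mathfrak{h}^T(S)$ produce a basic function $u_X$ with $\bar\pa_B u_X=-\sqrt{-1}\,i_X(\tfrac12 d\eta)$; and (ii) conversely reconstruct, from a Hamiltonian holomorphic vector field, a holomorphic vector field on $C(S)$ commuting with $\xi$.

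For (i), I would first note that $\beta:=i_X(\tfrac12 d\eta)$ is a basic $(0,1)$-form, since $i_\xi d\eta=0$ annihilates the Reeb component of $X$ while the horizontal part is of type $(1,0)$. The key local computation is that $\beta$ is $\bar\pa_B$-closed: writing $\tfrac12 d\eta=\sqrt{-1}\,g^T_{i\barj}\,dz^i\wedge d\bar z^j$ in a foliation chart and combining the holomorphicity of $d\pi_\alpha(X)$ with the transverse K\"ahler identity $\pa_{\bar z^k}g^T_{i\barj}=\pa_{\bar z^j}g^T_{i\bark}$, the antisymmetrization in the barred indices forces $\bar\pa_B\beta=0$. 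This is exactly where positivity enters: as recalled just above the lemma, positivity of $c_1^B(S)$ furnishes a basic K\"ahler form of positive transverse Ricci curvature, so the Bochner technique gives $H^1_B(S)=0$ and hence $H^{0,1}_B(S)=0$ by the transverse Hodge decomposition. The $\bar\pa_B$-closed form $\beta$ is then $\bar\pa_B$-exact, which yields the potential $u_X$. In fact, since $[\xi,X]=0$ together with $L_\xi\eta=0$ shows that $\eta(X)$ is a basic function, a direct computation on the cone using $\eta=d^c\log r$ and $\tfrac12 d\eta=\sqrt{-1}\,\pa\barpartial\log r$ shows one may take $u_X=\sqrt{-1}\,\eta(X)$, in agreement with Definition \ref{Hamhol}.

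For (ii), given a Hamiltonian holomorphic vector field $X$ with $u_X=\sqrt{-1}\,\eta(X)$, I would lift it to $C(S)$ by setting $\widetilde X=\sqrt{-1}\,\eta(X)\,W+Y$, where $W=r\frac{\pa}{\pa r}-\sqrt{-1}\,\xi$ is the holomorphic generator of the cone $\bfC^\ast$-action (so that $\eta(W)=-\sqrt{-1}$) and $Y$ is the horizontal $(1,0)$-lift of the holomorphic field $d\pi_\alpha(X)$; this choice gives $\eta(\widetilde X)=\eta(X)$ and $d\pi_\alpha(\widetilde X)=d\pi_\alpha(X)$. Computing $\bar\pa\,\widetilde X$ then separates into a transverse part, which vanishes because $d\pi_\alpha(X)$ is holomorphic, and a vertical part along $W$, whose vanishing is precisely condition (2); hence $\widetilde X$ is holomorphic, while $[\xi,\widetilde X]=0$ follows from the basic-ness of $\eta(X)$. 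This exhibits $X$ as an element of $\mathfrak{h}^T(S)$ and closes the loop.

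The main obstacle is step (i): the passage from $\bar\pa_B$-closedness to $\bar\pa_B$-exactness of $\beta$, that is, the vanishing $H^{0,1}_B(S)=0$, which is the analytic heart of the argument and the unique place where positivity of the basic first Chern class is essential; it rests on transverse Hodge theory (the identity $\Box_{d_B}=2\Box_{\bar\pa_B}$) and the Bochner argument on a metric of positive transverse Ricci form. The remaining verifications — the local K\"ahler computation giving $\bar\pa_B\beta=0$, the explicit identification $u_X=\sqrt{-1}\,\eta(X)$, and the holomorphicity of the reconstructed $\widetilde X$ — are formal once the bigraded bookkeeping on $C(S)$ and its translation into basic forms on $S$ is in place.
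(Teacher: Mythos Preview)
The paper does not prove this lemma; it is cited from \cite{cho-futaki-ono}, with only the preceding sentence (vanishing of the basic first cohomology by Bochner) serving as a hint. Your outline contains all the right ingredients and is the standard argument, but the role of positivity is misplaced. The direct cone computation you sketch in step~(i) --- that for $X\in\mathfrak h^T(S)$ the function $u_X=\sqrt{-1}\,\eta(X)=X(\log r)$ satisfies $\bar\pa_B u_X=-\sqrt{-1}\,i_X(\tfrac12 d\eta)$ --- holds on any Sasaki manifold, with no positivity hypothesis; likewise the lift in step~(ii) uses only condition~(2). So the correspondence between $\mathfrak h^T(S)$ and the Hamiltonian holomorphic vector fields on $S$ is unconditional, and the cohomological argument you insert into step~(i) is redundant there.

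Where positivity is genuinely required is the step you announce at the outset but do not carry out: identifying the Lie algebra of automorphisms of the transverse holomorphic structure with $\mathfrak h^T(S)$. An infinitesimal transverse automorphism is a priori only a vector field on $S$ commuting with $\xi$ and satisfying condition~(1) of Definition~\ref{Hamhol}; nothing forces $\sqrt{-1}\,\eta(X)$ to be a $\bar\pa_B$-potential for $i_X(\tfrac12 d\eta)$, and without this the lift of step~(ii) is not holomorphic. The vanishing $H^{0,1}_B(S)=0$ is exactly what lets one solve $\bar\pa_B u=-\sqrt{-1}\,i_X(\tfrac12 d\eta)$ for a basic $u$ and then adjust the $\xi$-component of $X$ so that $\sqrt{-1}\,\eta(X)=u$; only then is $X$ Hamiltonian holomorphic and liftable to the cone. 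In short, your Bochner/Hodge argument is correct but belongs to that opening identification rather than to step~(i).
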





Let $\gamma$ be a basic $(1,1)$-class which contains a positive $(1,1)$-form $\omega$. Then we say that $\omega$
is a basic K\"ahler form and that $\gamma$ is a basic K\"ahler class. For example the transverse K\"ahler form 
$\frac12 d\eta$ is a basic K\"ahler form and its basic cohomology class $[\frac12 d\eta]$ is a basic K\"ahler class.
If we assume the basic first Chern class $c_1^B(S)$ is positive then it is a basic K\"ahler class. 
For each basic K\"ahler form $\omega$ we can define the transverse Ricci form $\Ric^T(\omega)$ as in \eqref{TRic},
and it represents $c_1^B(S)$. Definition \ref{Hamhol} and Lemma \ref{cfo} also apply even if 
we replace $\frac12 d\eta$ by $\omega$.

 \section{Transverse holomorphic vector fields and transverse elliptic operators}

Let $(S, g)$ be a compact Sasaki manifold. We assume the basic first Chern class $c_1^B(S)>0$, and it admits a basic decomposition $c_1^B(S)=\sum\limits_{\ga=1}^k \gamma_\ga/2\pi$, each $\gamma_\ga$ is a basic \ka class. We fix a basic \ka form $\omega_\ga$ in each basic \ka class $\gamma_\ga$.  In this section, we prove Theorem \ref{thm2} stated in the Introduction
which is the relationship between the Lie algebra of transverse holomorphic vector fields and the twisted basic Laplacians on $S$.

For each basic \ka form $\omega_\ga$, since both $\Ric^T(\omega_\ga)$ and $\sum\limits_{\ga=1}^k\omega_\ga$ are in the basic first Chern class $c_1^B(S)$, by the transverse $\pa_B\bar\pa_B$ lemma \cite{el}, there is a basic function $f_\ga$ satisfying \eqref{potential}.
We normalize 
$f_\ga$
 by \eqref{normalize0}. This means $e^{f_\ga} \omega_\ga^m$ is independent of $\ga$. 
If we put 
\begin{equation}\label{volform}
dV := e^{f_\ga} \omega_\ga^m \wedge \eta
\end{equation}
then $dV$ defines a volume form on $S$. We may normalize so that $\int_S dV = 1$.


For a transverse holomorphic vector field $V\in\mathfrak{h}^T(S)$, by Lemma \ref{cfo}, let the basic function $u_\ga$ be the Hamiltonian function of $V$ with respect to the basic \ka form $\omega_\ga$.
In local holomorphic foliation coordinate 
$(x, z^1, \dots, z^m)$,  $\omega_\ga=\sqrt{-1}g^\ga_{i\bar j}dz^i\wedge d\bar z^j$. Since for each $\ga$ from $1$ to $k$, $\bar\pa_B u_\ga=\sqrt{-1}i_V\omega_\ga$, 
we have the coordinate component of $V$ are given by
\begin{equation}\label{63}
V=\grad_\ga u_\ga=\grad_\gb u_\gb.
\end{equation}
Define $\Delta_{\ga, f_\ga}$ by \eqref{t.b.Lap}.
\begin{proposition}\label{prop1}
For a transverse holomorphic vector field $V$,  the basic functions $u_1,\ \cdots,\ u_k$ satisfying \eqref{63} 
after suitable modifications by addition of constants satisfy
\begin{equation}\label{64}
\Delta_{1, f_1}u_1=\Delta_{2, f_2} u_2=\cdots=\Delta_{k, f_k} u_k=-\sum_{\gb=1}^k u_\gb.
\end{equation}
\end{proposition}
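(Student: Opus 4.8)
The plan is to reduce the statement to a pointwise identity on the local leaf spaces, where the transverse \ka geometry is honest \ka geometry, and then promote it globally. Since $u_\ga$, $f_\ga$ and the operators $\Delta_\ga$, $\grad_\ga$ are all basic, the vector field $V=\grad_\ga u_\ga$ descends to a genuine holomorphic vector field on each orbit space $V_\gl$, and I may carry out all computations there with the metric $g^\ga_{i\bar j}$ and its transverse Levi-Civita connection $\nabla^\ga$. Throughout I write $V^i=g_\ga^{i\bar j}\pa_{\bar j}u_\ga$ and use $\Delta_\ga u_\ga=g_\ga^{i\bar j}\nabla^\ga_{\bar j}\nabla^\ga_i u_\ga$.

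First I would establish the single identity
\begin{equation*}
\bar\pa_B\big(\Delta_{\ga,f_\ga}u_\ga+\textstyle\sum_{\gb=1}^k u_\gb\big)=0,\qquad \ga=1,\dots,k,
\end{equation*}
by showing that $\pa_{\bar k}$ of the bracket vanishes for every $k$. Transverse holomorphy of $V$ is equivalent to $\nabla^\ga_{\bar k}V^i=0$, that is, to the vanishing of the purely anti-holomorphic Hessian $\nabla^\ga_{\bar k}\nabla^\ga_{\bar j}u_\ga=0$. Differentiating $\Delta_\ga u_\ga$, commuting $\nabla^\ga_{\bar k}$ past $\nabla^\ga_i$, and discarding the vanishing anti-holomorphic Hessian leaves only the curvature commutator $g_\ga^{i\bar j}[\nabla^\ga_{\bar k},\nabla^\ga_i]\pa_{\bar j}u_\ga$, which by the standard \ka identity contracts to the transverse Ricci term $-R^\ga_{i\bar k}V^i$ (here $R^\ga_{i\bar k}$ denotes the components of $\Ric^T(\omega_\ga)$). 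Likewise, since $\nabla^\ga_{\bar k}V^i=0$, the drift term contributes $\pa_{\bar k}\big(V(f_\ga)\big)=V^i\,\pa_i\pa_{\bar k}f_\ga$. Adding the two contributions gives $\pa_{\bar k}\big(\Delta_{\ga,f_\ga}u_\ga\big)=-\big(R^\ga_{i\bar k}-\pa_i\pa_{\bar k}f_\ga\big)V^i$. The component form of \eqref{potential} reads $R^\ga_{i\bar k}-\pa_i\pa_{\bar k}f_\ga=\sum_{\gb}g^\gb_{i\bar k}$, so the right-hand side becomes $-\sum_{\gb}g^\gb_{i\bar k}V^i$. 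The decisive point is that $V=\grad_\gb u_\gb$ holds simultaneously for \emph{every} $\gb$ by \eqref{63}, whence $g^\gb_{i\bar k}V^i=\pa_{\bar k}u_\gb$ and the right-hand side is exactly $-\pa_{\bar k}\big(\sum_\gb u_\gb\big)$, establishing the identity.

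Once this is known, the bracket $\Delta_{\ga,f_\ga}u_\ga+\sum_\gb u_\gb$ is a basic function killed by $\bar\pa_B$, hence transversely holomorphic, hence a constant $c_\ga$ by compactness of $S$. To see that $c_\ga$ is independent of $\ga$ I would integrate against the common volume form $dV=e^{f_\ga}\omega_\ga^m\wedge\eta$ of \eqref{volform}: rewriting $\Delta_{\ga,f_\ga}u_\ga=e^{-f_\ga}g_\ga^{i\bar j}\pa_i\big(e^{f_\ga}\pa_{\bar j}u_\ga\big)$ exhibits it in divergence form for the weight $e^{f_\ga}$, so $\int_S\Delta_{\ga,f_\ga}u_\ga\,dV=0$ by transverse integration by parts (Lemma \ref{basic1}). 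Thus $c_\ga=\int_S\big(\sum_\gb u_\gb\big)\,dV$, which is manifestly independent of $\ga$; call it $c$. Finally I would absorb it: since adding a constant to any $u_\gb$ leaves $V=\grad_\gb u_\gb$ unchanged, replacing each $u_\ga$ by $u_\ga-c/k$ preserves all hypotheses, changes $\sum_\gb u_\gb$ by $-c$, and turns $\Delta_{\ga,f_\ga}u_\ga=-\sum_\gb u_\gb+c$ into \eqref{64}.

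The step I expect to be the main obstacle is the curvature bookkeeping in the first paragraph: getting the covariant-derivative commutator to produce the transverse Ricci tensor with the correct sign, and then recognizing—through the simultaneous identities $V=\grad_\gb u_\gb$ combined with the coupled potential equation \eqref{potential}—that the output is precisely $\sum_\gb u_\gb$ rather than $k\,u_\ga$ or $u_\ga$ alone. This matching of the $\gb$-sum is the one genuinely coupled feature of the argument, and it is exactly where the decomposition of $c_1^B(S)$ and the normalization \eqref{normalize0} enter.
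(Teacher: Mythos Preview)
Your proof is correct, and the central computation—the $\bar\pa_B$-identity via the curvature commutator and \eqref{potential}—coincides exactly with the second half of the paper's argument. The one genuine difference is in how you show the constants $c_\ga$ agree: the paper first establishes $\Delta_{\ga,f_\ga}u_\ga=\Delta_{\gb,f_\gb}u_\gb$ by a direct coordinate computation (expanding $\Delta_\ga u_\ga$ in terms of $\Delta_\gb u_\gb$ using $\pa_{\bar p}u_\ga=g^\ga_{i\bar p}g_\gb^{i\bar j}\pa_{\bar j}u_\gb$ and then invoking $e^{f_\ga}\omega_\ga^m=e^{f_\gb}\omega_\gb^m$), whereas you bypass this by integrating against the common volume form $dV$ and using the divergence structure of $\Delta_{\ga,f_\ga}$. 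Your route is shorter and conceptually cleaner; the paper's route has the advantage of giving the pointwise equality $\Delta_{\ga,f_\ga}u_\ga=\Delta_{\gb,f_\gb}u_\gb$ as an independent byproduct. One small inaccuracy in your closing commentary: the normalization \eqref{normalize0} does \emph{not} enter the $\bar\pa_B$-identity itself (only \eqref{potential} and \eqref{63} are used there); it enters only through the existence of the common $dV$ in your integration step.
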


\begin{proof}
First, we show that for $\ga\neq \gb$, 
\[\Delta_{\ga, f_\ga} u_\ga=\Delta_{\gb, f_\gb} u_\gb.\]
Note that (\ref{63}) implies 
$$\pa_{\bar p} u_\ga=g^\ga_{i\bar p}g_\gb^{i\bar j}\pa_{\bar j} u_\gb.$$ 
Thus
\begin{equation*}
\pa_k\pa_{\bar p} u_\ga=\pa_k\big(g_{i\bar p}^\ga g_\gb^{i\bar j}\pa_{\bar j} u_\gb\big)
= g_{i\bar p}^\ga g_\gb^{i\bar j}\pa_k\pa_{\bar j} u_\gb+g_\gb^{i\bar j}\pa_{\bar j} u_\gb\pa_k g^\ga_{i\bar p} + g_{i\bar p}^\ga\pa_k g_\gb^{i\bar j} \pa_{\bar j} u_\gb.
\end{equation*}
Taking trace with respect to $g_\ga^{k\bar p}$ and using the K\"ahler condition $\partial_i g^\ga_{j\bark} 
= \partial_j g^\ga_{i\bark}$, 
\begin{align*}
\Delta_\ga u_\ga&=\Delta_\gb u_\gb+ \big(g_\ga^{k\bar p}\pa_k g_{i\bar p}^\ga\big)g_\gb^{i\bar j} \pa_{\bar j} u_\gb+\pa_i g_\gb^{i\bar j}\pa_{\bar j} u_\gb\\
&=\Delta_\gb u_\gb+ g_\gb^{i\bar j} (\pa_i\log \omega^m_\ga)\pa_{\bar j} u_\gb- g_\gb^{i\bar j}(\pa_i\log \omega^m_\gb)\pa_{\bar j} u_\gb\\
&=\Delta_\gb u_\gb+ g_\gb^{i\bar j} \pa_i\log\frac{\omega_\ga^m}{\omega_\gb^m}\pa_{\bar j} u_\gb\\
&=\Delta_\gb u_\gb+ g_\gb^{i\bar j} \pa_i f_\gb\pa_{\bar j} u_\gb-g_\gb^{i\bar j} \pa_i f_\ga\pa_{\bar j} u_\gb\qquad\qquad (\star 1)\\
&=\Delta_\gb u_\gb+ g_\gb^{i\bar j} \pa_i f_\gb\pa_{\bar j} u_\gb-g_\ga^{i\bar j} \pa_i f_\ga\pa_{\bar j} u_\ga\qquad\qquad (\star 2).
\end{align*}
where 
$(\star 1)$  is due to $e^{f_\ga}\omega_\ga^m=e^{f_\gb}\omega_\gb^m$, and $(\star 2)$ is due to 
$\grad_{\ga} u_\ga=\grad_{\gb} u_\gb$. Hence
\[\Delta_{\ga, f_\ga} u_\ga=\Delta_{\gb, f_\gb} u_\gb.\]

Next, we show for any $\ga=1, \dots, k$, after a normalization, 
\[\Delta_{\ga, f_\ga} u_\ga+\sum\limits_{\gb=1}^k u_\gb=0.\]
We denote by $\nabla_\ga$ the covariant derivative with respect to $\omega_\ga$.
Then for smooth functions $\nabla_\ga^{\prime\prime} = \barpartial_B$ for any $\ga$, and for this reason the derivative
with respect to $i$-th coordinate will be written as $\nabla_{\ga, \bari} = \nabla_{B, \bari}$. 
Using the fact that for all $\ga$, $\nabla_{\ga, \bar i}\nabla_{\ga, \bar j} u_\ga=0$, and 
$\nabla^i_{\ga} u_\ga=\nabla^i_{\gb} u_\gb$, we have
\begin{align*}
\nabla_{B,\bark}(\Delta_{\ga, f_\ga} u_\ga)&=\nabla_{\ga, \bar k}\nabla_{\ga, i}\nabla_{\ga}^i u_\ga
+\nabla_{\ga}^i u_\ga\nabla_{\ga, \bark}\nabla_{\ga, i} f_\ga\\
&=-g^{i\bar j}_\ga R_{\ga, i\bar k}\nabla_{\ga, \barj} u_\ga
+g^{i\bar j}_\ga\nabla_{\ga, \bar k}\nabla_{\ga, i} f_\ga\nabla_{\ga, \bar j} u_\ga\\
&=\big(-\sum_{\gb=1}^k g_{i\bar k}^\gb\big)\nabla_{\ga}^i u_\ga
=-\sum_{\gb=1}^k g_{i\bar k}^\gb\nabla_{\gb}^i u_\gb\\
&=-\nabla_{B, \bar k}\big(\sum_{\gb=1}^k  u_\gb\big).
\end{align*}
It follows from the assumptions that $u_\ga$ are basic functions we obtain
$$\Delta_{\ga, f_\ga} u_\ga+\sum_{\gb=1}^k u_\gb=c_\ga$$
 for constants $c_\ga.$
 In particular, $c_\ga=c_\gb=c$. Furthermore, we choose
$v_\ga=u_\ga-\frac{c}{k}$, then $v_\ga$ satisfies
$$\Delta_{\ga, f_\ga} v_\ga+\sum\limits_{\gb=1}^k v_\gb=0.$$
\end{proof}
\begin{theorem}\label{isom}
Let $S$ be a compact Sasaki manifold with $c_1^B(S) > 0$, the decomposition \eqref{Sdecomp} and the choice of 
basic K\"ahler forms $\omega_\ga$ and basic smooth functions $f_\ga$ satisfying \eqref{potential}. Then the Lie algebra 
$\mathfrak h^T(S)$ of all transverse holomorphic vector fields is isomorphic to 
$$ \mathfrak h^\prime := \{u_1+ \cdots + u_k\ |\ \grad_\ga u_\ga=\grad_\gb u_\gb \in \mathfrak h^T(S),\ \ga, \gb=1, \dots, k, 
\int_S (u_1 + \cdots + u_k) dV = 0 \}$$
where the Lie algebra structure of the latter is given by the Poisson bracket of each $u_\ga$.
Here the isomorphism is given by $V \mapsto u_1+ \cdots + u_k$ with $V = \grad_\ga u_\ga$. 
\end{theorem}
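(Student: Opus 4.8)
The plan is to exhibit the map $\Phi \colon V \mapsto u_1 + \cdots + u_k$ as a well-defined linear bijection that intertwines the two Lie brackets. First I would settle well-definedness. Given $V \in \mathfrak{h}^T(S)$, Lemma \ref{cfo} produces for each $\ga$ a basic Hamiltonian $u_\ga$ with $\grad_\ga u_\ga = V$, determined only up to an additive constant; hence the sum $u_1 + \cdots + u_k$ is determined up to a single additive constant, namely $\sum_\ga c_\ga$. The normalization $\int_S (u_1 + \cdots + u_k)\,dV = 0$ then pins $\Phi(V)$ down uniquely, and by construction $\Phi(V)$ lies in $\mathfrak{h}'$. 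I would note that the functions supplied by Proposition \ref{prop1} already satisfy this normalization, since pairing $\Delta_{\ga, f_\ga} u_\ga = -\sum_\gb u_\gb$ with the constant $1$ against $dV$ and using that $\Delta_{\ga,f_\ga}$ is divergence-type gives $\int_S \sum_\gb u_\gb\, dV = 0$. Linearity of $\Phi$ is immediate, because Hamiltonians of a sum are the sums of the Hamiltonians up to constants and the normalization is additive.

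For bijectivity, surjectivity is essentially built into the definition of $\mathfrak{h}'$: an element $F = u_1 + \cdots + u_k$ carries an associated $V = \grad_\ga u_\ga \in \mathfrak{h}^T(S)$, and since the $u_\ga$ are unique up to constants whose sum is fixed by $\int_S F\, dV = 0$, one checks $\Phi(V) = F$. For injectivity I would invoke Proposition \ref{prop1}: if $\Phi(V) = 0$, then the normalized Hamiltonians satisfy $\Delta_{\ga, f_\ga} u_\ga = -\sum_\gb u_\gb = 0$. Writing $\Delta_{\ga, f_\ga} u = e^{-f_\ga} g_\ga^{i\barj}\pa_i\big(e^{f_\ga}\pa_{\barj} u\big)$ displays the twisted Laplacian as self-adjoint and nonpositive with respect to $dV$, with $\int_S (\Delta_{\ga, f_\ga} u)\,\bar u\, dV = -\int_S |\barpartial_B u|^2_\ga\, dV$; thus its kernel on basic functions consists of the transversely holomorphic, hence constant, functions. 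So each $u_\ga$ is constant and $V = \grad_\ga u_\ga = 0$.

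Finally, the Lie algebra structure. I would take the bracket on $\mathfrak{h}'$ to be $\{F, G\} = \sum_\ga \{u_\ga, v_\ga\}_\ga$, where $\{u_\ga, v_\ga\}_\ga = g_\ga^{i\barj}\big(\pa_{\barj} u_\ga\,\pa_i v_\ga - \pa_{\barj} v_\ga\,\pa_i u_\ga\big)$ is the transverse Poisson bracket for $\omega_\ga$; since this involves only derivatives it is insensitive to the additive constants in $u_\ga, v_\ga$, hence well-defined on $\mathfrak{h}'$. The crucial identity is $\grad_\ga \{u_\ga, v_\ga\}_\ga = [\grad_\ga u_\ga, \grad_\ga v_\ga] = [V, W]$, verified in a foliation chart: differentiating $\{u_\ga, v_\ga\}_\ga$ and using that the components $V^i = g_\ga^{i\barj}\pa_{\barj} u_\ga$ are transversely holomorphic, so $\pa_{\barl} V^i = 0$, together with the K\"ahler identity $\pa_i g^\ga_{m\barl} = \pa_m g^\ga_{i\barl}$, the non-tensorial terms cancel because $\pa_i g^\ga_{m\barl}$ is symmetric in $i,m$ while $V^i W^m - W^i V^m$ is antisymmetric. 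Granting this, $\{u_\ga, v_\ga\}_\ga$ is a Hamiltonian of $[V,W]$ for $\omega_\ga$, so $\sum_\ga \{u_\ga, v_\ga\}_\ga$ represents an element of $\mathfrak{h}'$ that, after subtracting its $dV$-average, coincides with the normalized Hamiltonian sum $\sum_\ga w_\ga$ representing $\Phi([V,W])$. Hence $\Phi([V,W]) = \{\Phi(V), \Phi(W)\}$, and $\Phi$ is a Lie algebra isomorphism.

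I expect the main obstacle to be the transverse Poisson bracket identity $\grad_\ga \{u_\ga, v_\ga\}_\ga = [V,W]$: keeping the sign conventions consistent and checking that the metric-derivative terms cancel through the K\"ahler condition requires care, whereas the analytic input (the kernel of $\Delta_{\ga, f_\ga}$) and the bookkeeping of additive constants and normalizations are routine.
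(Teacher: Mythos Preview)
Your proposal is correct and follows essentially the same approach as the paper: both rest on Lemma~\ref{cfo} and Proposition~\ref{prop1} to match transverse holomorphic vector fields with normalized Hamiltonian sums. The paper's proof is much terser---it simply observes that $u_1+\cdots+u_k \mapsto \grad_\ga u_\ga$ is an injection (constants are killed by the normalization) and invokes Proposition~\ref{prop1} for surjectivity---whereas you work with the inverse map $\Phi$, which forces you to argue injectivity via the kernel of $\Delta_{\ga,f_\ga}$; this is a slightly longer route but entirely sound. You also spell out the Lie bracket compatibility, which the paper leaves implicit in the statement.
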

\begin{proof} 
There is a natural injection from $\mathfrak h^\prime$ to $\mathfrak h^T(S)$ sending $u_1 + \cdots + u_k$ to
$V = \grad_\ga u_\ga$. This map is also surjective 
by Proposition \ref{prop1}.
\end{proof}
Now we turn to the proof of Theorem \ref{thm2}. First we prove it in the regular Sasaki case, namely the K\"ahler case.
The statement in this case should be helpful to understand the product configuration of the definition of K-stablility in the coupled K\"ahler case. 
\begin{theorem}\label{kahlercase}
Let $M$ be a Fano \ka manifold of complex dimension $n$. Suppose that for \ka forms $\omega_1, \dots, \omega_k$, there exist real smooth functions $f_1, \dots, f_k\in C^\infty(M)$, such that
\begin{align*}
\begin{cases}
\Ric(\omega_\ga)=\sum\limits_{\gb=1}^k\omega_\gb+\sqrt{-1}\pa\bar\pa f_\ga,\\
e^{f_1}\omega_1^m=e^{f_2}\omega_2^m=\cdots=e^{f_k}\omega_k^m.
\end{cases}
\end{align*}
Suppose also that non-constant complex valued smooth functions $u_1, \dots, u_k\in C^\infty(X)\otimes \bf C$ satisfy
\begin{enumerate}
\item $\nabla_\ga^i u_\ga=\nabla_\gb^i u_\gb$ for  $i=1, 2, \dots, n$ and $\ga, \gb=1, 2, \dots, k$.
\item $-\Delta_{\ga, f_\ga} u_\ga=\lambda\sum\limits_{\gb=1}^k u_\gb$ for $\ga=1, 2, \dots, k$, where $\Delta_{\ga, f_\ga} u_\ga=\Delta_\ga u_\ga+\nabla_\ga^i u_\ga\nabla_i f_\ga$, and $\Delta_\ga=-\bar\pa_\ga^*\bar\pa$ is the Beltrami-Laplacian with respect to the \ka form $\omega_\ga$.
\end{enumerate}
Then $\lambda\ge 1$. Moreover, if $\lambda=1$, the vector field $V=\grad_\ga u_\ga=\grad_\gb u_\gb$ is a holomorphic vector field.
\end{theorem}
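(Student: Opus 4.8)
The plan is to reduce the whole statement to the nonnegativity of a single weighted $L^2$-norm, just as in the classical case $k=1$; the only genuinely new point is that the coupling hypotheses (1) and (2), together with the normalization $e^{f_1}\omega_1^m=\cdots=e^{f_k}\omega_k^m$, let the $k$ individual estimates be summed against one common measure. Write $dV$ for this common weighted volume form $e^{f_\ga}\omega_\ga^m$, set $w:=\sum_{\gb=1}^k u_\gb$ and $V:=\grad_\ga u_\ga$ (independent of $\ga$ by (1)), and let $P^\ga_{\bark\barj}:=\nabla_{\ga,\bark}\nabla_{\ga,\barj}u_\ga$ be the anti-holomorphic Hessian of $u_\ga$ with respect to $\omega_\ga$. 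Since $\nabla_{\ga,\bark}V^i=g_\ga^{i\barj}P^\ga_{\bark\barj}$, the vanishing of $P^\ga$ for every $\ga$ is exactly the assertion that $V$ is holomorphic. Finally, $\Box_{f_\ga}:=-\Delta_{\ga,f_\ga}=\barpartial^\ast_{f_\ga}\barpartial_B$, where $\barpartial^\ast_{f_\ga}$ is the adjoint of $\barpartial_B$ with respect to $dV$, is a nonnegative self-adjoint operator on $L^2(M,dV)$, and (2) reads $\Box_{f_\ga}u_\ga=\lambda w$ for each $\ga$.

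The heart of the argument is a pointwise identity extending the computation in Proposition \ref{prop1}. Applying $\nabla_{\ga,\bark}$ to $\Delta_{\ga,f_\ga}u_\ga$, commuting covariant derivatives, and substituting the Ricci curvature by means of the coupled equation $\Ric(\omega_\ga)-\sqrt{-1}\pa\barpartial f_\ga=\sum_{\gb}\omega_\gb$ and the coupling $\grad_\ga u_\ga=\grad_\gb u_\gb$, I expect the terms involving $\nabla_i\nabla_{\bark}f_\ga$ to cancel in pairs, leaving
\[g_\ga^{i\barj}\big(\nabla_i+\pa_i f_\ga\big)P^\ga_{\bark\barj}=(1-\lambda)\,\nabla_{\ga,\bark}w.\]
This extends Proposition \ref{prop1} to $\lambda\neq1$: when $\lambda=1$ it says the $f_\ga$-weighted divergence of $P^\ga$ vanishes, and imposing $P^\ga\equiv0$ conversely returns \eqref{64}. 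Carrying out this commutator bookkeeping and verifying the cancellation of the $f_\ga$-terms is the step I expect to be the main obstacle; the rest is formal.

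Granting the identity, I would compute $\int_M\|P^\ga\|^2\,dV$ by integrating by parts with respect to $dV$: the weighted divergence theorem turns a contracted $\nabla_a$ into $-(\nabla_a+\pa_a f_\ga)$, so that (using the symmetry of $P^\ga_{\bark\barj}$) the $f_\ga$-weighted divergence of $P^\ga$ appears and can be replaced by $(1-\lambda)\nabla_{\ga,\bark}w$ from the identity. This yields, for each $\ga$,
\[\int_M\|P^\ga\|^2\,dV=(\lambda-1)\int_M\langle\barpartial_B w,\barpartial_B u_\ga\rangle_{\omega_\ga}\,dV.\]
The right-hand integral equals $\langle w,\Box_{f_\ga}u_\ga\rangle_{L^2(dV)}=\lambda\int_M|w|^2\,dV$ by the adjointness of $\barpartial^\ast_{f_\ga}$ and the eigenvalue equation, so each term already gives $\int_M\|P^\ga\|^2\,dV=\lambda(\lambda-1)\int_M|w|^2\,dV$; summing over $\ga$,
\[\sum_{\ga=1}^k\int_M\|P^\ga\|^2\,dV=k\,\lambda(\lambda-1)\int_M|w|^2\,dV.\]

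Finally I would read off the conclusion. The functions are non-constant, so $w\not\equiv0$: if $w\equiv0$ then $\Box_{f_\ga}u_\ga=\lambda w=0$ forces $\|\barpartial_B u_\ga\|^2=\langle\Box_{f_\ga}u_\ga,u_\ga\rangle=0$ and every $u_\ga$ constant. Pairing $\Box_{f_\ga}u_\ga=\lambda w$ with $u_\ga$ and summing gives $\sum_\ga\|\barpartial_B u_\ga\|^2=\lambda\int_M|w|^2\,dV$, whence $\lambda$ is real and, the left side being nonzero, $\lambda>0$. The master identity then has nonnegative left-hand side and $k\lambda\int_M|w|^2\,dV>0$, forcing $\lambda\ge1$. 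If $\lambda=1$ every summand $\int_M\|P^\ga\|^2\,dV$ vanishes, so $P^\ga\equiv0$ for all $\ga$; equivalently $\barpartial_B V=0$ in each foliation chart, i.e. $V=\grad_\ga u_\ga$ is a holomorphic vector field, completing the proof.
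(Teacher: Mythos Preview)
Your proof is correct and follows essentially the same route as the paper's: compute the weighted $L^2$-norm of the anti-holomorphic Hessian $P^\ga$ via integration by parts against the common measure $dV=e^{f_\ga}\omega_\ga^m$, use the Ricci identity together with $\Ric(\omega_\ga)-\sqrt{-1}\pa\bar\pa f_\ga=\sum_\gb\omega_\gb$ and the coupling $\nabla_\ga^i u_\ga=\nabla_\gb^i u_\gb$, and insert the eigenvalue equation to arrive at $\int_M\|P^\ga\|^2\,dV=\lambda(\lambda-1)\int_M|w|^2\,dV$ for each $\ga$. The only difference is organizational: you first isolate the pointwise identity $g_\ga^{i\barj}(\nabla_i+\pa_i f_\ga)P^\ga_{\bark\barj}=(1-\lambda)\nabla_{\bark}w$ (a clean extension of the computation in Proposition~\ref{prop1} to arbitrary $\lambda$) and then integrate, whereas the paper performs the entire calculation inside one chain of integral equalities without separating out this intermediate step. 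Note that, as you yourself observe, each $\ga$ already yields the full identity, so the summation over $\ga$ is not actually needed for the conclusion.
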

\begin{proof}
We compute
\begin{eqnarray}\label{positive}
&&\int_M |\nabla^{\prime\prime}_\ga\nabla^{\prime\prime}_\ga u_\ga|_{\omega_\ga}^2\ e^{f_\ga}\omega_\ga^m \\
&=&\int_M \nabla^{i}_\ga \nabla^{j}_\ga u_\ga \nabla_{\ga,i}\nabla_{\ga,j} \bar u_\ga\ e^{f_\ga}\omega_\ga^m \nonumber\\
&=&-\int_M\nabla_{\ga, j}\big(\nabla_\ga^{i}\nabla_\ga^{j} u_\ga e^{f_\ga}\big)
\nabla_{\ga,i}\bar u_\ga\ \omega_\ga^m \nonumber\\
&=&-\int_M \Big(\nabla_\ga^{i}(\Delta_\ga u_\ga)+R_{\ga,}{}_j{}^i\nabla_\ga^{j} u_\ga+\nabla_\ga^{i}\nabla_\ga^{j} u_\ga\nabla_j^\ga f_\ga\Big)\nabla_{\ga,i}\bar{u}_\ga\ e^{f_\ga}\omega_\ga^m \nonumber\\
&=&-\int_M \Big(\nabla_\ga^{i}(\Delta_\ga u_\ga)+\nabla_{\ga, j} \nabla_\ga^{i} f_\ga\nabla_\ga^{j} u_\ga+\nabla_\ga^{i}
\nabla_\ga^{j} u_\ga \nabla_{\ga, j} f_\ga\Big)\nabla_{\ga, i}\bar{u}_\ga\ e^{f_\ga}\omega_\ga^m \nonumber\\
&&-\int_M \big(\sum\limits_{\gb=1}^k g_{\gb,i\bar j}\big)\nabla^i_\ga u_\ga\bar{\nabla_\ga^j u_\ga}\ e^{f_\ga}\omega_\ga^m\nonumber\\
&=&-\int_M \nabla_\ga^{i}(\Delta_{\ga, f_\ga} u_\ga)\nabla_{\ga,i}\bar u_\ga\ e^{f_\ga}\omega_\ga^m - \int_M 
\sum\limits_{\gb=1}^k g_{\gb,i\bar j}\nabla^i_\gb u_\gb\bar{\nabla_\gb^j u_\gb}\ e^{f_\gb}\omega_\gb^m\hspace{1 in} \nonumber\\
&=&\int_M |\Delta_{\ga,f_\ga} u_\ga|^2\  e^{f_\ga}\omega_\ga^m+\sum\limits_{\gb=1}^k\int_M \big(\Delta_{\gb,f_\gb} u_\gb\big)\bar u_\gb \ e^{f_\gb}\omega_\gb^m\nonumber\\
&=&\int_M \lambda^2|\sum\limits_{\gb=1}^k u_\gb|^2\  e^{f_\ga}\omega_\ga^m-\lambda \sum\limits_{\gb=1}^k\int_M \big(\sum\limits_{\gamma=1}^k u_\gamma\big)\bar u_\gb\ e^{f_\gb}\omega_\gb^m\nonumber\\
&=&\lambda(\lambda-1)\int_M |\sum\limits_{\gb=1}^k u_\gb|^2\ e^{f_\gb}\omega_\gb^m, \nonumber
\end{eqnarray}
where we have used $e^{f_\ga}\omega_\ga^m=e^{f_\gb}\omega_\gb^m$ and $\nabla^i_\ga u_\ga=\nabla^i_\gb u_\gb$.
Taking the $L^2$-inner product with $u_\gamma$ on both sides of 
$-\Delta_{\ga, f_\ga} u_\ga=\lambda\sum\limits_{\gb=1}^k u_\gb$ 
and taking sum over $\gamma = 1, \cdots, k$, we see that $\lambda > 0$ since we assumed $u_\ga$ are non-constants.
Then from the computations \eqref{positive} we conclude $\lambda \ge 1$. Moreover, if $\lambda=1$, then $V=\grad_\ga u_\ga$ is a holomorphic vector field.
\end{proof}


\begin{proof}[Proof of Theorem \ref{thm2}]
In the proof of Theorem \ref{kahlercase} we replace the volume form $e^{f_1}\omega_1^m = \cdots = e^{f_k}\omega_k^m$
by $dV = e^{f_1}\omega_1^m \wedge \eta = \cdots = e^{f_k}\omega_k^m \wedge \eta$. 
Then by Lemma \ref{basic1} and Lemma \ref{basic2} the same computations of Theorem \ref{kahlercase} proves (1) of Theorem \ref{thm2}. The part (2) follows from Lemma \ref{cfo}, Proposition \ref{prop1} and Theorem \ref{kahlercase}.
\end{proof}

\section{The invariant for coupled Sasaki-Einstein manifold}

Let $S$ be a compact Sasakian manifold, $\xi$ be the Reeb vector field and $\eta$ be the contact form. We assume that the basic first Chern class $c_1^B(S)$ is positive and that it admits a basic decomposition $c_1^B(S)=(\sum\limits_{\ga=1}^k\gamma_\ga)/2\pi$, where $\gamma_1, \dots, \gamma_k$ are basic \ka classes. In this section, we define an invariant on $S$ which gives an obstruction to the existence of coupled Sasaki-Einstein metrics. This invariant extends the obstruction to the existence of \ke metrics obtained in \cite{futaki83.1}, but it is expressed in the form obtained in Proposition 2.3 \cite{futakimorita85}.

As in the Introduction, we let $\omega_\ga$ be the basic K\"ahler forms in $\gamma_\ga$ and $f_\ga$ be the basic smooth functions such that $\Ric^T(\omega_\ga)=\sum\limits_{\gb=1}^k\omega_\gb+\sqrt{-1}\pa_B\bar\pa_B f_\ga$, and the basic functions $f_1, \dots, f_k$ are normalized by (\ref{normalize0}), i.e.
\[e^{f_1}\omega_1^m=\cdots=e^{f_k}\omega_k^m.\]
Taking a transverse holomorphic vector field $V\in \mathfrak{h}^T(S)$, let complex-valued basic functions $u_1, \cdots,  u_k$ be the Hamiltonian functions of $V$ with respect to basic \ka forms $\omega_1, \cdots, \omega_k$.

In the Introduction we defined $\Fut: \mathfrak{h}^T(S) \to \bfC$ by \eqref{futaki} using Theorem \ref{thm2}, (2). 
But since the condition \eqref{64} is equivalent to the normalization $\int_S (u_1 + \cdots + u_k)\  dV = 0$
we may re-define $\Fut$ using the latter normalization. Then 
this definition is independent of the choice $u_1,\ \dots,\ u_k$ satisfying
the normalization $\int_S (u_1 + \cdots + u_k)\ dV = 0$
since if $\tu_1,\ \cdots,\ \tu_k$ are another choice to represent the same element in $\mathfrak h^\prime$ then $\tu_\ga = u_\ga + c_\ga$
for some constants $c_1,\ \cdots, c_k$ with $c_1 + \cdots + c_k = 0$. 
\begin{proof}[Proof of Theorem \ref{Futaki inv}]
If we take basic K\"ahler forms $\tilde\omega_\ga=\omega_\ga+\sqrt{-1}\pa_B\bar\pa_B\varphi_\ga$, where $\varphi_1, \dots, \varphi_k$ are basic functions. Then 
$\sum\limits_{\ga=1}^k\tilde\omega_\ga=\sum\limits_{\ga=1}^k\omega_\ga+\sqrt{-1}\pa_B\bar\pa_B\big(\sum\limits_{\ga=1}^k\varphi_\ga\big).$
Let $\tilde f_\ga$ be the basic functions satisfying 
\begin{align*}
\begin{cases}
\Ric^T(\tilde\omega_\ga)=\sum\limits_{\gb=1}^k\tilde\omega_\gb+\sqrt{-1}\pa_B\bar\pa_B\tilde f_\ga\\
e^{\tilde f_1}\tilde\omega_1^m=\dots=e^{\tilde f_k}\tilde\omega_k^m.
\end{cases}
\end{align*}
Then
\begin{equation}
e^{\sum\limits_{\gb=1}^k\varphi_\gb}e^{\tilde f_\ga}\tilde\omega_\ga^m=e^{f_\ga}\omega_\ga^m.
\end{equation}

For the transverse holomorphic vector field $V$, we take the Hamiltonian functions $u_\ga$ with respect to
$\omega_\ga$ satisfying the
normalization condition $\int_S (u_1 + \cdots + u_k)\ dV = 0$ or equivalently the condition \eqref{64}.
We then take $\tilde u_\ga=u_\ga+V(\varphi_\ga)$ to be  the Hamiltonian function of $V$ with respect to
$\tilde\omega_\ga$. 
We first show that $\tilde u_\ga$'s satisfy the normalization condition $\int_S (\tilde u_1 + \cdots + \tilde u_k)\ d\tilde V = 0$
where $d\tilde V$ denotes the corresponding volume form $e^{\tilde f_\ga}\tilde\omega_\ga^m\wedge\eta$. We have 
$$\sum\limits_{\ga=1}^k\tilde u_\ga=\sum\limits_{\ga=1}^k u_\ga+V\big(\sum\limits_{\ga=1}^k\varphi_\ga\big).$$
We compute using the condition \eqref{64} that
\begin{align*}
&\int_S (\tilde u_1 + \cdots + \tilde u_k)\ d\tilde V
=\int_S\big(\sum\limits_{\gb=1}^k \tilde u_\gb\big)\ e^{\tilde f_\ga}\tilde\omega_\ga^m\wedge\eta\\
=&\int_S\Big(\sum\limits_{\ga=1}^k u_\ga+V\big(\sum\limits_{\gb=1}^k\varphi_\gb\big)\Big)\ e^{-\sum\limits_{\gb=1}^k\varphi_\gb}e^{f_\ga}\omega_\ga^m\wedge\eta\\
=&-\int_S\Delta_{\ga, f_\ga} u_\ga\ e^{-\sum\limits_{\gb=1}^k\varphi_\gb}e^{f_\ga}\ \omega_\ga^m\wedge\eta-\int_SV\Big(e^{-\sum\limits_{\gb=1}^k\varphi_\gb}\Big)e^{f_\ga}\omega_\ga^m\wedge\eta\\=&0.
\end{align*}

Next we show that $\mathrm{Fut}$ is independent of the choice of the basic K\"ahler forms $\omega_\ga$ in $\gamma_\ga$.
To show this, consider the deformation 
$\omega_\ga(t)=\omega_\ga+\sqrt{-1}\pa_B\bar\pa_B(t\varphi_\ga)$ 
for basic functions $\varphi_\ga$. Let $u_\ga$ and $u_\ga(t)$ be the Hamiltonian functions of $V$ with respect to $\omega_\ga$ and $\omega_\ga(t)$, where $u_\ga(t)=u_\ga+ tV(\varphi_\ga)$. Then it is enough to show the integrals
$\int_S u_\ga(t)\ \big(\omega_\ga(t)\big)^m\wedge\eta$ and $\int_S\big( \omega_\ga(t)\big)^m\wedge\eta$ are 
independent of $t$.
But this follows from
\begin{align*}
 &\frac{d}{dt}\int_S\big( \omega_\ga(t)\big)^m\wedge\eta=\int_S (\Delta_{\omega_{\ga}(t)}\varphi_\ga)\ (\omega_{\ga}(t))^m\wedge\eta=0, \text{ and }\\
&\frac{d}{dt}\int_S u_\ga(t)\ \big(\omega_\ga(t)\big)^m\wedge\eta=\int_S V(\varphi_\ga)\big(\omega_\ga(t)\big)^m\wedge\eta+\int_S u_\ga(t)\Delta_{\omega_\ga(t)}\varphi_\ga\ \big(\omega_\ga(t)\big)^m\wedge\eta=0.
\end{align*}

If $S$ admits coupled Sasaki-Einstein metric $\omega_1, \cdots, \omega_k$, then we can take all $f_\ga$ to be zeros. Thus 
the normalization $\int_S (u_1 + \cdots + u_k)\ dV = 0$ implies $\Fut(V)=0$.
This completes the proof.
\end{proof}

Let $\omega_\alpha$ be a basic K\"ahler form in the basic K\"ahler class $\gamma_\ga$.
For $V,\ W_1,\ \cdots, W_k \in \mathfrak h^T(S)$ we put
\begin{equation}\label{TZ2}
\Fut_{W_1, \cdots, W_k}(V) = \sum_{\ga=1}^k \frac{\int_S u_{\ga,V}\ e^{u_{W_\ga}} \omega_\ga^m \wedge \eta}{\int_S e^{u_{W\ga}} \omega_\ga^m \wedge \eta}
\end{equation}
where $\grad_\ga u_{\ga,V} = V$ and $\grad_\ga u_{W_\ga} = W_\ga$ and we assume $u_{\ga,V}$'s
satisfy \eqref{64} or equivalently 
the normalization 
$$\int_S (u_{1,V} + \cdots + u_{k,V})\ dV = 0.$$
By a similar proof as that of Theorem \ref{Futaki inv}, one can show that 
this is also independent of the choice of $\omega_\ga$'s in $\gamma_\ga$'s. When $W_1=\cdots=W_k=0$ , 
$\Fut_{W_1, \cdots, W_k}(V)$ coincides with $\Fut(V)$ in \eqref{futaki}.
\begin{theorem}\label{KRSobst}
If there exists coupled Sasaki-Ricci solitons \eqref{potential} for the decomposition $c_1^B(S)=\sum_{\ga=1}^k\gamma_\ga/2\pi$
then $\Fut_{W_1, \cdots, W_k}$ identically vanishes for $W_\ga = \grad_\ga f_\ga$, $\ga = 1, \cdots, k$.
\end{theorem}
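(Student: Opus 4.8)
The plan is to evaluate $\Fut_{W_1,\cdots,W_k}$ directly on a soliton metric, using the fact (noted just after \eqref{TZ2}, and proved as in Theorem \ref{Futaki inv}) that $\Fut_{W_1,\cdots,W_k}$ does not depend on the choice of basic K\"ahler representatives $\omega_\ga\in\gamma_\ga$. By hypothesis there exist such representatives $\omega_1,\dots,\omega_k$ together with basic functions $f_1,\dots,f_k$ satisfying \eqref{potential} and the normalization \eqref{normalize0}, with each $f_\ga$ a Hamiltonian Killing potential for $\omega_\ga$; I fix these as my representatives and set $W_\ga=\grad_\ga f_\ga$.

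First I would identify the weights $e^{u_{W_\ga}}$. Since $\grad_\ga u_{W_\ga}=W_\ga=\grad_\ga f_\ga$ and both $u_{W_\ga}$ and $f_\ga$ are basic, the difference $u_{W_\ga}-f_\ga$ is $\bar\pa_B$-closed, hence constant on the compact manifold $S$; thus $u_{W_\ga}=f_\ga+c_\ga$ for constants $c_\ga$. Using the definition \eqref{volform} of $dV$, which is independent of $\ga$ by \eqref{normalize0}, this gives
\[
 e^{u_{W_\ga}}\,\omega_\ga^m\wedge\eta = e^{c_\ga}\,e^{f_\ga}\,\omega_\ga^m\wedge\eta = e^{c_\ga}\,dV .
\]

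Substituting into \eqref{TZ2}, the common factor $e^{c_\ga}$ cancels between numerator and denominator of each summand, so that
\[
 \Fut_{W_1,\cdots,W_k}(V) = \sum_{\ga=1}^k \frac{\int_S u_{\ga,V}\,dV}{\int_S dV} = \frac{1}{\int_S dV}\int_S\Big(\sum_{\ga=1}^k u_{\ga,V}\Big)\,dV,
\]
which vanishes by the normalization $\int_S (u_{1,V}+\cdots+u_{k,V})\,dV=0$ imposed on the Hamiltonian functions in \eqref{TZ2}. As $V\in\mathfrak h^T(S)$ is arbitrary, $\Fut_{W_1,\cdots,W_k}\equiv 0$.

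The only point requiring care is the identification $u_{W_\ga}=f_\ga+c_\ga$: one must know that $f_\ga$ is genuinely a Hamiltonian function for $W_\ga$, so that $W_\ga\in\mathfrak h^T(S)$, which is exactly the Killing-potential hypothesis, and that the additive constants drop out of the ratio. Once this matching of the soliton weights $e^{f_\ga}$ with the common volume form $dV$ is established, the vanishing is formal, reducing entirely to the two normalizations \eqref{normalize0} and $\int_S\sum_\ga u_{\ga,V}\,dV=0$.
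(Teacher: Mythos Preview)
Your proof is correct and follows essentially the same route as the paper: evaluate \eqref{TZ2} at the soliton metrics, identify $u_{W_\ga}$ with $f_\ga$ (the paper takes $u_{W_\ga}=f_\ga$ directly, whereas you track the harmless additive constant $c_\ga$), use \eqref{normalize0} and \eqref{volform} to collapse all summands to integrals against $dV$, and then invoke the normalization $\int_S\sum_\ga u_{\ga,V}\,dV=0$. There is no substantive difference.
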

\begin{proof}
Suppose we have a solution of coupled Sasaki-Ricci solitons \eqref{potential}. We may normalize $f_\ga$ so that 
\eqref{normalize0} is satisfied. Then since $W_\ga = \grad_\ga f_\ga$ and thus $u_{W_\ga} = f_\ga$ in \eqref{TZ2}, 
we obtain using \eqref{volform}
\begin{equation}\label{TZ3}
\Fut_{W_1, \cdots, W_k}(V) = \left(\sum_{\ga=1}^k \int_S u_{\ga,V} dV\right)/\int_S dV,
\end{equation}
which vanishes by Theorem \ref{isom}. This completes the proof of Theorem \ref{KRSobst}.
\end{proof}
\noindent
The above theorem is an extension of \cite{futaki83.1}, \cite{TianZhu02}.

\section{Toric Sasaki-Ricci solitons}

Let $T$ be a real torus of dimension $m+1$ acting effectively 
on $S$ as isometries and $\mathfrak t$ its Lie algebra. Naturally $T$ acts on the K\"ahler cone
$C(S)$ as holomorphic isometries.  Note that in this case we have an effective 
$(\bfC^\ast)^{m+1}$-action on $C(S)$, and $m+1$ is the maximal dimension of the torus action because of dimension reason. In such a case we say $C(S)$ is a toric K\"ahler cone and $S$ 
is a toric Sasaki manifold. See \cite{Legendre11} for a concise description of toric Sasaki geometry.

We identify an element $X \in \mathfrak t$ with a vector field on $C(S)$
and denote it by the same letter $X$. 
Note that the Reeb vector field $\xi$ lies in $\mathfrak t$. Since the K\"ahler form on $C(S)$  is given by $\frac{\sqrt{-1}}2 \partial\barpartial r^2$ the moment map 
$\mu : C(S) \to \mathfrak t^\ast$ on the K\"ahler cone $C(S)$ 
for the action of $T$ is given for $X \in \mathfrak t$ by
$$ \langle \mu,X\rangle =  r^2\eta(X)$$
where we recall that the contact form $\eta$ is extended on $C(S)$ by \eqref{RCcone}. 
It is well known that the image of $\mu$ is a rational convex polyhedral cone which we denote by 
$\mathcal C$. 
The Sasaki manifold $S \cong \{r=1\}$ is characterized as $ \langle \mu,\xi\rangle = 1$, and its moment map
image is $\{p \in \mathcal C\ |\ \langle p,\xi\rangle = 1 \}$. We denote by $\mathcal P_\xi$ the image 
of $\{p \in \mathcal C\ |\ \langle p,\xi\rangle = 1 \}$ under the projection 
$\pi_\xi : \mathfrak t^\ast \to (\mathfrak t/\bfR \xi)^\ast$. 
$\mathcal P_\xi$ is the image of $\mu_\xi := \pi_\xi \circ \mu|_S : S \to (\mathfrak t/\bfR \xi)^\ast$ which we call
the transverse moment map for $S$.
$\mathcal P_\xi$ is a rational convex polytope when $\xi$ defines a
quasi-regular Sasaki structure, but otherwise it is not rational. The inverse image by $\mu$ of 
each facet of $\mathcal C$ is the fixed point set in $C(S)$ of an one dimensional torus. If $\lambda
\in \mathfrak t$ is its infinitesimal generator then $\lambda$ is normal to the facet. The 
inverse image by $\mu_\xi$ of corresponding 
facet of $\mathcal P_\xi$ is the fixed point set in $S$ of the same one dimensional torus,
and $\lambda/\mathbf R \xi \in \mathfrak t/\mathbf R \xi$ is normal to the corresponding facet of 
$\mathcal P_\xi$. (Note that there is no meaning of rationality in $\mathfrak t/\mathbf R \xi$ 
if $\xi$ is not rational.)

Suppose that the basic first Chern class is positive. For any basic K\"ahler form $\omega$
invariant under $T$ we have a transverse moment map $\mu_\omega : S \to (\mathfrak t/\bfR \xi)^\ast$ defined up to translation by the same reason as in the paragraph after Lemma \ref{cfo}. Note that the Hamiltonian 
functions in Lemma \ref{cfo} can be taken to be real functions since $T$ acts isometries. Then  the image of the transverse moment map is a convex polytope, which we denote by $\mathcal P_\omega$, and the facets have the same
description as above. In particular, the facets of $\mathcal P_\xi$ and $\mathcal P_\omega$ are
parallel to each other.

Let $c_1^B(S)=(\sum\limits_{\ga=1}^k\gamma_\ga)/2\pi$ be a basic decomposition where $\gamma_1, \dots, \gamma_k$ are basic \ka classes, and 
choose basic K\"ahler forms $\omega_\ga \in \gamma_\ga$. 
We put $\mathcal P_\ga := \mathcal P_{\omega_\ga}$ which is independent of $\omega_\ga \in 
\gamma_\ga$ up to translation. Choose $W_1, \cdots, W_k \in \mathfrak t$. 
Then $\Fut_{W_1, \cdots, W_k}$ in \eqref{TZ2}, which is independent of choice of $\omega_\ga \in \gamma_\ga$, is expressed as 
\begin{equation}\label{TZ4}
\Fut_{W_1, \cdots, W_k} = \sum_{\ga=1}^k \mathcal A_{\mathcal P_\ga}(W_\ga)
\end{equation}
where 
$$ \mathcal A_{\mathcal P_\ga} (W_\ga) = \frac{\int_{\mathcal P_\ga} p\ e^{\langle W_\ga, p\rangle} dp}{\int_{\mathcal P_\ga} e^{\langle W_\ga, p\rangle} dp}. $$

\begin{theorem}\label{toricsoliton}
Let $S$ be a compact toric Sasaki manifold with the basic first Chern class $c_1^B(S)$
positive, and $c_1^B(S)=(\sum\limits_{\ga=1}^k\gamma_\ga)/2\pi$ be a basic decomposition.
Let $\mathcal P_\ga$ be the convex polytope which is the image of the transverse moment map 
for $\gamma_\ga$ with normalization $\sum\limits_{\ga=1}^k \int_{\mathcal P_\ga} pdp = 0$. 
Then there exist coupled K\"ahler-Ricci solitons satisfying \eqref{potential} for
$W_\ga = \grad_\ga f_\ga \in \mathfrak t$ if and only if 
$$\sum_{\ga=1}^k \mathcal A_{\mathcal P_\ga}(W_\ga) = 0.$$
\end{theorem}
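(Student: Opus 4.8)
The necessity is immediate from what comes before: if coupled Sasaki-Ricci solitons exist for $W_\ga = \grad_\ga f_\ga$, then Theorem \ref{KRSobst} gives $\Fut_{W_1, \cdots, W_k} \equiv 0$, and by the toric expression \eqref{TZ4} this says precisely $\sum_\ga \mathcal A_{\mathcal P_\ga}(W_\ga) = 0$. So the whole content is the sufficiency, and the plan is to convert the coupled system \eqref{potential} into a coupled family of real Monge-Amp\`ere equations on $\R^m = (\mathfrak t/\bfR\xi)^\ast$ and then invoke the existence theory of Hultgren \cite{Hultgren17}, whose toric Fano problem reduces to the identical PDE.

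First I would fix $T$-invariant basic \ka forms $\omega_\ga \in \gamma_\ga$ and pass to the transverse complex description. A $T$-invariant transverse \ka potential is a strictly convex function $\psi_\ga$ on $\R^m$; the transverse moment map is $x \mapsto \grad \psi_\ga(x)$ with image the interior of $\mathcal P_\ga$, and $\omega_\ga^m$ is proportional to $\det(D^2\psi_\ga)$. The Hamiltonian of $W_\ga \in \mathfrak t$ with respect to $\omega_\ga$ is the corresponding component of the moment map, namely the linear pairing $\langle W_\ga, \grad\psi_\ga(x)\rangle$, so $f_\ga = \langle W_\ga, \grad\psi_\ga\rangle$ up to a constant. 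Feeding $\Ric^T(\omega_\ga) = -\sqrt{-1}\pa_B\barpartial_B \log\det(D^2\psi_\ga)$ into \eqref{potential} and integrating turns the soliton system into
\begin{equation*}
\det\big(D^2\psi_\ga\big) = e^{-\sum_{\gb=1}^k \psi_\gb - \langle W_\ga, \grad\psi_\ga\rangle - c}, \qquad \ga = 1, \dots, k,
\end{equation*}
for strictly convex $\psi_\ga$ whose gradient image is $\mathcal P_\ga$, where the normalization \eqref{normalize0} forces the individual integration constants $c_\ga$ to a single common value $c$. By Theorem \ref{Minkowski} this same normalization is equivalent to the Minkowski condition $\sum_\ga \int_{\mathcal P_\ga} p\, dp = 0$, which is exactly what fixes the relative translations of the $\mathcal P_\ga$ and makes the system well posed.

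With the reduction in place, the plan is to solve the displayed system variationally, minimizing a coupled Ding-type functional whose Euler--Lagrange equations are the above; its linearization at a reference configuration is governed by the weighted barycenters $\mathcal A_{\mathcal P_\ga}(W_\ga)$, and one shows the functional is proper on the space of normalized convex functions \emph{if and only if} $\sum_\ga \mathcal A_{\mathcal P_\ga}(W_\ga) = 0$. Properness then yields a minimizer by the direct method, and interior regularity for real Monge-Amp\`ere equations upgrades it to a smooth strictly convex solution, which on reversing the reduction gives the transverse \ka potentials of the desired coupled solitons. The genuinely hard step is the sufficiency half of the properness estimate: controlling the coupled functional along convex functions that degenerate toward the facet behavior dictated by the $\mathcal P_\ga$, which is exactly where the barycenter condition is used. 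Since the PDE, the polytope data, and the weights $e^{\langle W_\ga, p\rangle}$ coincide with Hultgren's, his a priori $C^0$ and gradient estimates apply once the reduction above is carried out; the only caveat is that for irregular $\xi$ the $\mathcal P_\ga$ need not be rational, but rationality enters neither the convexity nor the facet-normal data used in those estimates, so the conclusion persists.
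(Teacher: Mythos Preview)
Your overall strategy---reduce the coupled transverse soliton equation to a system of real Monge--Amp\`ere equations on $\R^m$ governed by the polytopes $\mathcal P_\ga$ and the weights $e^{\langle W_\ga,\cdot\rangle}$, then invoke Hultgren---is exactly what the paper does, and your derived equation is the $t=1$ endpoint of the paper's continuity family. The differences are in execution rather than idea. First, the paper runs a continuity method (a one-parameter family interpolating via an auxiliary reference $\omega_0$ with $\Ric^T(\omega_0)=\sum_\ga\omega_\ga$), so that the only analytic input needed is the $C^0$-estimate along the path, which is then borrowed verbatim from Hultgren; you instead frame the existence variationally via a coupled Ding functional and properness, which is a legitimate alternative but is not what Hultgren actually does in the cited paper, so your final sentence about ``his a priori $C^0$ and gradient estimates'' sits a bit awkwardly with the variational narrative. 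Second, and more substantively, the paper spends most of its effort on the step you pass over in one line (``pass to the transverse complex description''): since $S$ is only Sasaki, one must explain how to obtain honest affine logarithmic coordinates. The paper does this by choosing a codimension-one subtorus $H\subset T$ whose Lie algebra misses $\xi$, and working on the $H^c$-orbit through a generic point, which is a genuine complex submanifold on which the transverse K\"ahler geometry is faithfully represented; this is where the Minkowski-sum normalization (your appeal to Theorem~\ref{Minkowski}) enters to pin down the potentials up to constants. Your sketch is not wrong, but the identification you wrote, $\R^m=(\mathfrak t/\bfR\xi)^\ast$, is on the wrong side of the Legendre duality: the affine $x$-coordinates live in (a complement to $\xi$ in) $\mathfrak t$, while the $\mathcal P_\ga$ sit in the dual. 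None of this is a fatal gap, but the paper's care in the reduction is precisely the new content beyond Hultgren's Fano argument.
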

Note that the condition  $\sum\limits_{\ga=1}^k \int_{\mathcal P_\ga} pdp = 0$ implies that the
barycenters of the Minkowski sum $\sum\limits_{\ga=1}^k \mathcal P_\ga$ lies at the origin. 
This condition is equivalent to the the normalization 
$$\int_S (u_{1,V} + \cdots + u_{k,V})\ dV = 0$$
in Theorem \ref{isom}. 
Let $K_S$ denote the complex line bundle over $S$ consisting of basic $(m,0)$-forms, and
call $K_S$ the transverse canonical line bundle and $K_S^{-1}$ the transverse anti-canonical
line bundle. We put $\omega := \omega_1 + \cdots + \omega_k$, which is a basic K\"ahler form
in $c_1^B(S) = c_1(K^{-1}_S)$. Let $F$ be a basic smooth function such that
\begin{equation}\label{F}
\Ric^T(\omega) - \omega = \sqrt{-1}\partial_B\barpartial_B F.
\end{equation}
We put $\Delta_F u:= \Delta u + (\grad_\omega u)F$. Then
Theorem \ref{thm2} and Theorem \ref{isom}  assert that when $k=1$ 
we have the isomorphisms of the Lie algebra $\mathfrak h^T(S)$ 
of all transverse holomorphic vector fields 
\begin{eqnarray}\label{moment1}
\mathfrak h^T(S) &\cong& \{u \in C^\infty(S)_B\otimes \bfC\ |\ -\Delta_F u = u\}\\
&=&
\{u \in C^\infty(S)_B\otimes \bfC\ |\ \int_S u\ e^F \omega^m\wedge \eta = 0\}.\nonumber
\end{eqnarray}
We will call the moment map for the class $c_1(K_S^{-1})$ defined by the Hamiltonian functions $u$
in \eqref{moment1} the standard moment map, 
and denote its moment polytope by $\mathcal P_{-K_S}$. In the Appendix we will show that this moment map is indeed 
standard.

\begin{theorem}\label{Minkowski}
The normalization condition $\int_S (u_{1,V} + \cdots + u_{k,V})\ dV = 0$ is equivalent to
$$ \sum_{\ga=1}^k \mathcal P_\ga =\mathcal P_{-K_S}$$
where the left side hand is the Minkowski sum of the polytopes $\mathcal P_\ga$'s.
\end{theorem}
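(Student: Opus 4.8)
The plan is to reduce the statement to the additivity of transverse moment maps together with the fact, recalled in this section, that for a toric structure all the transverse moment polytopes have facets with the same fixed normals. First I would record the elementary observation that transverse Hamiltonian functions add: for $V\in\mathfrak t$, if $u_{\ga,V}$ denotes the Hamiltonian of $V$ with respect to $\omega_\ga$, then $\sum_\ga u_{\ga,V}$ is a Hamiltonian of $V$ with respect to $\omega=\omega_1+\cdots+\omega_k$, because $\bar\pa_B\big(\sum_\ga u_{\ga,V}\big)=\sqrt{-1}\sum_\ga i_V\omega_\ga=\sqrt{-1}\,i_V\omega$. Hence the transverse moment map for $\omega$ equals $\sum_\ga\mu_{\omega_\ga}$ up to a translation. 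Since the inverse images of the facets are fixed-point sets of the one-dimensional subtori determined by $\mathfrak t$ and $\xi$ alone, the polytopes $\mathcal P_\ga$ and $\mathcal P_\omega$ all share the same facet normals in $\mathfrak t/\bfR\xi$; evaluating the summed moment map at the common $T$-fixed points then shows that the image of $\sum_\ga\mu_{\omega_\ga}$ is exactly the Minkowski sum $\sum_\ga\mathcal P_\ga$. Thus $\sum_\ga\mathcal P_\ga=\mathcal P_\omega$ as polytopes up to a translation, the translation being governed by the chosen additive constants in the $u_{\ga,V}$.

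Next I would identify $\mathcal P_\omega$, suitably normalized, with $\mathcal P_{-K_S}$. Both are moment polytopes of the same form $\omega\in c_1^B(S)=c_1(K_S^{-1})$, so they can differ only by a translation. The polytope $\mathcal P_{-K_S}$ is cut out by the standard moment map whose Hamiltonians $u_V^\omega$ satisfy $\int_S u_V^\omega\,e^F\omega^m\wedge\eta=0$, whereas $\sum_\ga\mathcal P_\ga$ is cut out by $V\mapsto\sum_\ga u_{\ga,V}$. Writing $\sum_\ga u_{\ga,V}=u_V^\omega+d_V$ with $d_V$ a constant depending linearly on $V$, the two positioned polytopes coincide precisely when $d_V=0$ for all $V\in\mathfrak t$.

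The crucial analytic input is that the two normalizing measures agree up to a constant. Subtracting \eqref{F} from \eqref{potential} (with $\sum_\gb\omega_\gb=\omega$) gives $\Ric^T(\omega_\ga)-\Ric^T(\omega)=\sqrt{-1}\pa_B\bar\pa_B(f_\ga-F)$, while the definition \eqref{TRic} of the transverse Ricci form gives $\Ric^T(\omega_\ga)-\Ric^T(\omega)=\sqrt{-1}\pa_B\bar\pa_B\log(\omega^m/\omega_\ga^m)$. Since a basic function $h$ with $\sqrt{-1}\pa_B\bar\pa_B h=0$ is constant, we get $e^{f_\ga}\omega_\ga^m=C\,e^F\omega^m$ for a constant $C$, so that $dV=C\,e^F\omega^m\wedge\eta$. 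Consequently $\int_S u_V^\omega\,e^F\omega^m\wedge\eta=0$ is the same as $\int_S u_V^\omega\,dV=0$, and integrating $\sum_\ga u_{\ga,V}=u_V^\omega+d_V$ against $dV$ yields $\int_S\sum_\ga u_{\ga,V}\,dV=d_V\int_S dV$. Therefore $\int_S(u_{1,V}+\cdots+u_{k,V})\,dV=0$, imposed for all $V\in\mathfrak t$, holds if and only if $d_V\equiv 0$, which by the previous paragraph is exactly the assertion $\sum_\ga\mathcal P_\ga=\mathcal P_{-K_S}$.

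I expect the main obstacle to be the purely toric step asserting that the image of the summed moment map is the honest Minkowski sum of the $\mathcal P_\ga$, especially in the transverse and possibly irrational Sasakian setting where $\mathcal P_\xi$ need not be rational and one cannot simply invoke the projective toric dictionary. The argument through common facet normals and evaluation at fixed points handles this, but some care is needed to verify that the normal fans genuinely coincide and that the vertex correspondence is compatible with Minkowski addition; by contrast, the remaining input, the measure identity $dV\propto e^F\omega^m\wedge\eta$, is a short $\pa_B\bar\pa_B$ computation.
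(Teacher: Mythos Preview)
Your proof is correct and rests on the same key identity as the paper's, namely that $dV$ is a constant multiple of $e^F\omega^m\wedge\eta$, from which it follows that $\sum_\ga u_{\ga,V}$ satisfies the normalization \eqref{moment1} defining $\mathcal P_{-K_S}$. The difference lies in how this identity is obtained and in how much of the toric step is spelled out. The paper introduces the auxiliary form $\omega_0$ with $\Ric^T(\omega_0)=\omega$ (appealing to the transverse Calabi--Yau theorem of \cite{yau78}, \cite{el}) and then compares both $e^F\omega^m$ and $e^{f_\ga}\omega_\ga^m$ to $\omega_0^m$; you instead subtract \eqref{F} from \eqref{potential} directly and use the $\pa_B\bar\pa_B$-lemma, which is more elementary and avoids invoking a deep existence result. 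Conversely, the paper takes for granted that the moment polytope of $\omega=\sum_\ga\omega_\ga$ is the Minkowski sum $\sum_\ga\mathcal P_\ga$, while you supply the argument via the common normal fan and evaluation at the $T$-fixed points; your caution about the irrational case is well placed, but the fixed-point argument you sketch does go through since the facet normals are determined by the one-dimensional isotropy subtori, which depend only on the toric contact structure and not on the choice of $\omega_\ga$.
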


\begin{proof} By \cite{yau78}, \cite{el} there exists a unique basic K\"ahler form 
$\omega_0$ in $c_1^B(S)$ such that
$\Ric^T(\omega_0) = \omega$. Then using \eqref{F} we have
$$ e^F \omega^m = \omega_0^m$$
by adding a constant to $F$.
On the other hand by \eqref{potential} with the normalization \eqref{normalize0} we also have
for any $\ga$
$$ e^{f_\ga}\omega_\ga^m = \omega_0^m.$$
Thus, using \eqref{volform}, we have for any $\ga$
$$ e^F \omega^m \wedge \eta = e^{f_\ga}\omega_\ga^m\wedge \eta = dV.$$
Hence
$$ \int_S (u_{1, V} + \cdots + u_{k, V}) dV = 0$$
implies that $u = u_{1, V} + \cdots + u_{k, V}$ satisfies the condition in \eqref{moment1}. 
This implies the Minkowski sum $\sum\limits_{\ga=1}^k \mathcal P_\ga$ coincides with 
$\mathcal P_{-K_S}$.
This completes the proof.
\end{proof}
\noindent
When $S$ is the unit circle bundle of $K_X^{-1}$ of a
Fano manifold $X$ the Minkowski sum condition in Theorem \ref{Minkowski} is equivalent to
$\sum\limits_{\ga=1}^k \mathcal P_\ga = \mathcal P_{-K_X}$ in Hultgren's paper \cite{Hultgren17}.

\begin{proof}[Proof of Theorem \ref{toricsoliton}.]
We choose basic K\"ahler forms $\omega_\ga$ in $\gamma_\ga$, and consider 
for $t \in [0,1]$ the family of Monge-Amp\`ere equations
\begin{equation}\label{MA}
e^{g_1 + W_1(\phi_1)}(\omega_1 + \sqrt{-1}\partial_B\barpartial_B \phi_1)^m =
\cdots = e^{g_k + W_k(\phi_k)}(\omega_k + \sqrt{-1}\partial_B\barpartial_B \phi_k)^m 
= e^{-t\sum\limits_\ga \phi_\ga}\omega_0^m
\end{equation}
in terms of basic functions $\phi_\ga$, 
where $\omega_0$ is the unique basic K\"ahler form 
 in a basic K\"ahler class such that
$\Ric^T(\omega_0) = \omega_1 + \cdots + \omega_k$ and $g_\ga$ is the potential
function of $W_\ga$ with respect to $\omega_\ga$, i.e. $\grad_\ga g_\ga = W_\ga$.
If we have a solution for $t=1$ the K\"ahler forms $\omega_\ga + \sqrt{-1}\partial \barpartial \phi_\ga$
give the desired coupled Sasaki-Ricci solitons. By the same argument as in \cite{pingali}, \cite{Hultgren17}
we only need to show the $C^0$-estimates. To do so, we wish to reduce the equation to
holomorphic logarithmic coordinates, further to real Monge-Amp\`ere equation with respect 
to the real coordinates, and to show the same arguments in \cite{Hultgren17} apply in our 
Sasaki situation.

As in \cite{futakionowang}, section 7, we take any subtorus $H\subset T$ of codimension 1 
such that its Lie algebra ${\mathfrak h}$
does not contain $\xi$. Let $H^c \cong ({\bf C}^*)^m$ denote
the complexification
of $H$. Take any point $p\in \mu^{-1}(\mathrm{Int}\, \mathcal C)$ and consider the orbit
$Orb_{C(S)}(H^c,p)$ of the $H^c$-action on $C(S)$ through $p$. 
Since $H^c$-action preserves $-J\xi=r\partial\slash \partial r$, it descends to an action on the set
$S \cong \{r=1\}\subset C(S)$. 
More precisely this action is described as follows. Let $\gamma : H^c \times C(S) \to
C(S)$ denote the $H^c$-action on $C(S)$.  Let ${\overline p}$ and $\overline{\gamma(g,p)}$ respectively
be the points on $\{r=1\}$ at which the flow lines through $p$ and $\gamma(g,p)$ generated by  $r\partial\slash
\partial r$ respectively meet $\{r=1\}$. 
Then the $H^c$-action on $S \cong \{r=1\}$
is given by $\bar{\gamma} : H^c \times \{r=1\} \to \{r=1\}$ where
$$\bar{\gamma}(g, {\overline p}) = \overline{\gamma(g,p)}.$$
Let $Orb_S(H^c,{\overline p})$ be the orbit of the induced action of $H^c$
on $S \cong \{r=1\}$. 
Then as in Proposition 7.2, \cite{futakionowang}, 
the transverse K\"ahler structure of the Sasaki manifold $S$ 
is completely determined by the restriction of $\frac 12 d\eta$ to $Orb_{C(S)}(H^c, p)$,
and also to $Orb_S(H^c,{\overline p})$.
For other basic K\"ahler form $\omega$ on $S$ we may restrict $\omega$ to these two
orbits, and consider the transverse K\"ahler geometry as the K\"ahler geometry on 
$Orb_{C(S)}(H^c, p)$
and $Orb_S(H^c,{\overline p})$. The two K\"ahler manifolds 
$(Orb_{C(S)}(H^c, p), \omega_{Orb_{C(S)}(H^c, p)})$ and 
$(Orb_S(H^c, \barp), \omega_{Orb_S(H^c, \barp)})$ thus obtained are essentially the same 
in that if we give them the holomorphic structures induced from the holomorphic structure
of $H^c$ then they are isometric K\"ahler manifolds. The difference between them is that
$Orb_{C(S)}(H^c, p)$  is a complex submanifold of the complex manifold $C(S)$
while $Orb_S(H^c, \barp)$ is a complex submanifold in the real Sasaki manifold $S$.
Furthermore, since the Reeb vector field can be approximated
by quasi-regular ones, we may assume that the closure of $(Orb_S(H^c, \barp), \omega_{Orb_S(H^c, \barp)})$ is a toric K\"ahler orbifold. 

For any generic point $
q' \in S$ the trajectory through $q'$ generated
 by the Reeb vector field $\xi$ meets $Orb_S(H^c, \barp)$ and
$\xi$ generates an one parameter subgroup of isometries. So, the transverse K\"ahler geometry at
any $q'$ is determined by the transverse K\"ahler geometry along the points on $Orb_S(H^c, \barp)$. This trajectory may meet $Orb_S(H^c, \barp)$ infinitely many times when the Sasaki structure is
irregular. But the transverse structures at all of them define the same
K\"ahler structure because $\xi$ generates a subtorus in $T^{m+1}$ and we assumed that $T^{m+1}$ preserves the Sasaki structure. 

Now we will express the K\"ahler potentials of $\omega_\ga$ in terms real affine coordinates
on $H^c$. 
On $Orb_{C(S)}(H^c, p) \cong  ({\bfC}^*)^m$ we use the affine logarithmic coordinates 
$$(w^1,w^2,\cdots ,w^m)=
(x^1+\sqrt {-1}\theta^1,x^2+\sqrt {-1}\theta^2,\cdots, x^m+\sqrt {-1}\theta^m)$$
for a point
$$(e^{x^1+\sqrt {-1}\theta^1}, e^{x^2+\sqrt {-1}\theta^2}, \cdots ,e^{x^m+\sqrt {-1}\theta^m}) \in 
 ({\bf C}^*)^m \cong H^c.$$

By Section A.2.3 in \cite{Guillemin}, there is a real smooth function $h_\ga$ unique up to an
affine linear function such that 
$$\omega_\ga = \sqrt{-1} \frac{\partial^2 h_\ga}{\partial x^i \partial x^j} dw^i\wedge dw^\barj.$$
We call $h_\ga$ the K\"ahler potential of $\omega_\ga$.
However we have a fixed moment map image $\mathcal P_\ga$ so that $h_\ga$ is 
determined only up to a constant. Since $\omega = \sum\limits_\ga \omega_\ga$ and
we have the equality of Minkowski sum $\mathcal P_{-K_S} = \sum\limits_\ga \mathcal P_\ga$
the K\"ahler potential of $\omega$ is equal to 
$\sum\limits_\ga \phi_\ga$ up to a constant. This implies on $Orb_{C(S)}(H^c, p)$
\begin{equation}\label{pote1}
\omega_0^m = e^{-\sum\limits_\ga h_\ga}  
\big(\sqrt{-1}\,dw^1 \wedge dw^{\overline 1}\big) \wedge \cdots \wedge \big(\sqrt{-1}\, dw^m \wedge dw^{\overline m}\big).
\end{equation}
If we set
$$ f_\ga = h_\ga + \phi_\ga$$
then 
\begin{equation}\label{pote2}
 \omega_\ga + \sqrt{-1}\partial\barpartial \phi_\ga
= \sqrt{-1} \frac{\partial^2 f_\ga}{\partial x^i \partial x^j} dw^i\wedge dw^\barj.
\end{equation}
By \eqref{pote1} and \eqref{pote2} we obtain
\begin{equation}\label{pote3}
e^{-t\sum\limits_\ga\phi_\ga}\omega_0^m = e^{t\sum\limits_\ga f_\ga - (1-t)\sum\limits_\ga h_\ga} 
\big(\sqrt{-1}\,dw^1 \wedge dw^{\overline 1}\big) \wedge \cdots \wedge \big(\sqrt{-1}\, dw^m \wedge dw^{\overline m}\big)
\end{equation}
and
\begin{equation}\label{pote4}
(\omega_\ga + \sqrt{-1}\partial\barpartial \phi_\ga)^m 
= \det\left(\frac{\partial^2 f_\ga}{\partial x^i \partial x^j}\right)
\big(\sqrt{-1}\,dw^1 \wedge dw^{\overline 1}\big) \wedge \cdots \wedge \big(\sqrt{-1}\, dw^m \wedge dw^{\overline m}\big).
\end{equation}
Since both $W_\ga(f_\ga)$ and $g_\ga + W_\ga(\phi_\ga)$ are Hamiltonian functions of
$W_\ga$ with respect to $\omega_\ga + \sqrt{-1}\partial\barpartial\phi_\ga$ we have
$$ W_\ga(f_\ga) + C_\ga = g_\ga + W_\ga(\phi_\ga)$$
for some constant $C_\ga$. Then normalizing $g_\ga$ so that
$$ \int_{Orb_S(H^c, \barp)} e^{g_\ga}\omega_\ga^m = 1$$
we obtain
$$ \int_{Orb_S(H^c, \barp)} e^{g_\ga + W_\ga(\varphi_\ga)}(\omega_\ga + \sqrt{-1}\partial\barpartial\phi_\ga)^m
= 1$$
and 
\begin{eqnarray*}
 \int_{Orb_S(H^c, \barp)} e^{W_\ga(f_\ga) + C_\ga} (\omega_\ga + \sqrt{-1}\partial\barpartial\phi_\ga)^m
&=& \int_{\bfR^m} e^{W_\ga(f_\ga) + C_\ga} \det\left(\frac{\partial^2 f_\ga}{\partial x^i \partial x^j}\right)dx\\
&=& e^{C_\ga}\int_{\mathcal P_\ga} e^{\langle W_\ga, p\rangle} dp.
\end{eqnarray*}
Thus we obtain $C_\ga = -\log Vol_{W_\ga}(\mathcal P_\ga)$, and therefore
\begin{equation}\label{pote5}
\frac{e^{W_\ga(f_\ga)}}{Vol_{W_\ga}(\mathcal P_\ga)} = e^{g_\ga + W_\ga(\phi_\ga)}
\end{equation}
where
$$ Vol_{W_\ga}(\mathcal P_\ga) = \int_{\mathcal P_\ga} e^{\langle W_\ga, p\rangle} dp.$$
From \eqref{pote3}, \eqref{pote4} and \eqref{pote5}, the Monge-Amp\`ere equation \eqref{MA} reduces to the real Monge-Amp\`ere equation
$$ \frac{e^{W_1(f_1)}}{Vol_{W_1}(\mathcal P_1)} \det\left(\frac{\partial^2 f_1}{\partial x^i \partial x^j}\right)
= \cdots = 
 \frac{e^{W_k(f_k)} }{Vol_{W_k}(\mathcal P_k)} \det\left(\frac{\partial^2 f_k}{\partial x^i \partial x^j}\right)
 = e^{t\sum\limits_\ga f_\ga - (1-t)\sum\limits_\ga h_\ga}.$$
 For the rest of the proof  the same arguments as in \cite{Hultgren17} applies.
 This completes the proof of Theorem \ref{toricsoliton}.
 \end{proof}
In \cite{TianZhu02} and \cite{Wang-Zhu04} it is shown that for a toric Fano manifold $\Fut_W$ is the derivative of
a proper convex function at $W \in \mathfrak k$ and that there exists a unique soliton vector field $W$, i.e. $\Fut_W = 0$.
The same arguments apply in our coupled Sasaki Ricci-soliton case to find $W  \in \mathfrak k$ 
with $\Fut_{W,\cdots,W} = 0$. Thus we obtain the following corollary to Theorem \ref{toricsoliton}.
\begin{corollary}\label{toricsoliton2}
Let $S$ be a compact toric Sasaki manifold with the basic first Chern class $c_1^B(S)$
positive, and $c_1^B(S)=(\sum\limits_{\ga=1}^k\gamma_\ga)/2\pi$ be a basic decomposition.
Then there exists a Killing potential $W$ such that for $W_1 = \cdots = W_k =W$ we have coupled
Sasaki Ricci-solitons.
\end{corollary}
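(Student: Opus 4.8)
The plan is to reduce the existence of the coupled soliton to a finite-dimensional convex minimization over the space $\mathfrak t/\bfR\xi$ of transverse Killing fields, exactly in the spirit of Tian--Zhu \cite{TianZhu02} and Wang--Zhu \cite{Wang-Zhu04} already invoked in the paragraph above. By Theorem \ref{toricsoliton}, once we restrict to the diagonal $W_1 = \cdots = W_k = W$, coupled Sasaki--Ricci solitons exist precisely when
\[
\mathcal F(W) := \sum_{\ga=1}^k \mathcal A_{\mathcal P_\ga}(W) = 0 .
\]
So it suffices to produce $W \in \mathfrak t/\bfR\xi$ with $\mathcal F(W)=0$, and the natural route is to exhibit $\mathcal F$ as the gradient of a proper, strictly convex function.

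First I would introduce the functional
\[
\mathcal G(W) := \sum_{\ga=1}^k \log \int_{\mathcal P_\ga} e^{\langle W,p\rangle}\,dp .
\]
Differentiating in a direction $V$ gives $D_V\mathcal G(W) = \sum_\ga \langle V, \mathcal A_{\mathcal P_\ga}(W)\rangle = \langle V, \mathcal F(W)\rangle$, so $\nabla\mathcal G = \mathcal F$ and the zeros of $\mathcal F$ are exactly the critical points of $\mathcal G$. The Hessian of the $\ga$-th summand is the covariance matrix of the probability measure proportional to $e^{\langle W,p\rangle}\,dp$ on the full-dimensional polytope $\mathcal P_\ga$, hence positive definite; therefore $\mathcal G$ is smooth and strictly convex, and a critical point, if one exists, is the unique global minimum.

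The crux is properness. For a unit vector $u$ one has the standard asymptotic slope $\lim_{t\to+\infty}\tfrac1t \log\int_{\mathcal P_\ga}e^{\langle tu,p\rangle}\,dp = h_{\mathcal P_\ga}(u)$, where $h_{\mathcal P_\ga}(u) = \max_{p\in\mathcal P_\ga}\langle u,p\rangle$ is the support function. Summing over $\ga$ and using that support functions are additive under Minkowski sums, together with Theorem \ref{Minkowski}, I obtain
\[
\lim_{t\to+\infty}\tfrac1t\,\mathcal G(tu) = \sum_{\ga=1}^k h_{\mathcal P_\ga}(u) = h_{\sum_\ga\mathcal P_\ga}(u) = h_{\mathcal P_{-K_S}}(u).
\]
As noted after Theorem \ref{toricsoliton}, the normalization $\sum_\ga\int_{\mathcal P_\ga}p\,dp=0$ places the barycenter of the Minkowski sum $\sum_\ga\mathcal P_\ga=\mathcal P_{-K_S}$ at the origin, so the origin lies in the interior of $\mathcal P_{-K_S}$ and hence $h_{\mathcal P_{-K_S}}(u) > 0$ for every $u\neq 0$; by compactness of the unit sphere this lower bound is uniform, whence $\mathcal G(W)\to+\infty$ as $|W|\to\infty$ and $\mathcal G$ is proper. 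A proper, strictly convex, smooth function on a finite-dimensional vector space attains its minimum at a unique point $W$, at which $\mathcal F(W)=\nabla\mathcal G(W)=0$. Applying Theorem \ref{toricsoliton} with $W_1=\cdots=W_k=W$ then yields the coupled Sasaki--Ricci solitons, proving the corollary.

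I expect the main obstacle to be the properness step, and within it the verification that the origin genuinely lies in the interior of $\mathcal P_{-K_S}$: this is precisely where the Minkowski-sum identity of Theorem \ref{Minkowski} and the barycentric normalization must be combined, and where care is needed because the standard moment map is defined through the weighted measure $e^F\omega^m\wedge\eta$ rather than Lebesgue measure. The convexity and the asymptotic-slope computation are routine once this interiority is secured.
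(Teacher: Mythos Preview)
Your proposal is correct and follows exactly the approach the paper indicates: the paper's own argument simply cites Tian--Zhu \cite{TianZhu02} and Wang--Zhu \cite{Wang-Zhu04}, asserts that $\Fut_{W,\cdots,W}$ is the derivative of a proper convex function on $\mathfrak t/\bfR\xi$, and then invokes Theorem~\ref{toricsoliton}. Your explicit potential $\mathcal G$, its covariance-matrix Hessian, and the support-function asymptotics for properness are precisely the content of ``the same arguments apply'' that the paper leaves implicit; the interiority concern you flag is handled by Theorem~\ref{Minkowski} together with the Appendix's identification of $\mathcal P_{-K_S}$ with the standard anticanonical polytope.
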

General uniqueness result modulo automorphisms was established for coupled K\"ahler-Einstein metrics in
\cite{HultgrenWittNystrom18}. The case for Sasaki-Ricci solitons will necessitate the 
pluripotential theory for Sasaki manifolds, and is beyond the scope of this paper.
\section{Appendix}
Let $L \to X$ be an ample line bundle over a compact complex manifold $X$. We choose a Hermitian metric $h$ of $L$ such that its curvature form $\omega$ is a positive form. Suppose that
we have a Hamiltonian action of a torus $T$ on $X$. The moment map for the torus action is defined up to a translation. This ambiguity depends on the choice of lifting of the action on $X$ to $L$. 
However for the anti-canonical line bundle $L = K_X^{-1}$ we have the standard lifting, namely
the action induced by the push-forward. We call the moment map of $K_X^{-1}$ for Fano manifold
corresponding to the push-forward 
the {\it standard moment map}. Similarly we have the standard moment map for $K_S^{-1}$ 
for a compact Sasaki manifold $S$ with $c_1^B(S) > 0$.
\begin{theorem} Let $S$ (resp. $X$) be a toric Sasaki manifold with $c_1^B(S) > 0$ (resp. toric Fano manifold).
The moment map given by the Hamiltonian functions $u$ in \eqref{moment1} is the standard one
for $K_S^{-1}$ (resp. $K_X^{-1}$). 
\end{theorem}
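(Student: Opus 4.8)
The plan is to compute the standard push-forward moment map explicitly and to recognize its Hamiltonian as the normalized function $u$ of \eqref{moment1}. Both the push-forward lift and the transverse K\"ahler data live on the local orbit spaces $V_\gl$, so for $S$ the computation is purely transverse and is word-for-word the K\"ahler computation on each $V_\gl$, while the Fano case for $X$ is this same computation carried out globally. I would therefore perform the local K\"ahler calculation on $\Lambda^m T^{1,0}$ and then read off the Sasaki statement, integrating against $dV=e^F\omega^m\wedge\eta$ from \eqref{volform}.

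First I would fix the Hermitian metric on $K_S^{-1}$ (resp.\ $K_X^{-1}$) whose curvature is the given $\omega$. For the natural metric $h_0$ induced by $\omega$ the local frame $e=\partial_1\wedge\cdots\wedge\partial_m$ of $\Lambda^m T^{1,0}$ has $h_0(e,e)=\det(g^T_{i\bar j})$, and its curvature is $-\sqrt{-1}\partial_B\barpartial_B\log\det(g^T)=\Ric^T(\omega)$ by \eqref{TRic}. Since $\Ric^T(\omega)-\omega=\sqrt{-1}\partial_B\barpartial_B F$ by \eqref{F}, the rescaled metric $h_1:=e^F h_0$ has curvature exactly $\omega$; this $h_1$, together with the push-forward lift, is what defines the standard moment map.

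The main step is to evaluate the push-forward lift against the Chern connection of $h_1$ on sections. For $V=\grad_\omega u$ (a transverse holomorphic field of type $(1,0)$) the push-forward acts on the frame by the Lie derivative $L_V e=-(\partial_i V^i)\,e$, while the Chern connection of $h_1$ gives $\nabla_V e=\big(V(F)+V(\log\det g^T)\big)e$. Using the K\"ahler identity $\Gamma^i_{ik}=\partial_k\log\det g^T$ to convert the coordinate divergence into the covariant divergence $\nabla_i V^i=\Delta u$, I would find that the operator $L_V-\nabla_V$ is multiplication by $-\Delta u-V(F)=-\Delta_F u$. Thus the Hamiltonian of $V$ determined by the standard lift is exactly $-\Delta_F u$, carrying no residual additive ambiguity, since $-\Delta_F(u+c)=-\Delta_F u$ for every constant $c$.

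To finish, I would invoke the $k=1$ case of Proposition \ref{prop1}, equivalently the characterization \eqref{moment1}: the Hamiltonian $u$ normalized by $\int_S u\,e^F\omega^m\wedge\eta=0$ is precisely the one satisfying $-\Delta_F u=u$. Hence the standard-lift Hamiltonian $-\Delta_F u$ equals this normalized $u$, so the moment map of \eqref{moment1} coincides with the standard moment map for $K_S^{-1}$; the Fano statement for $K_X^{-1}$ is the same argument without the basic/transverse decorations, and $T$-invariance makes all Hamiltonians real so that $\mathcal P_{-K_S}$ is a genuine polytope. I expect the only real obstacle to be bookkeeping: fixing the sign and the single factor of $\sqrt{-1}$ in the lift identity $L_V-\nabla_V=\sqrt{-1}\langle\mu,V\rangle$ compatibly with the normalization $\barpartial_B u=-\sqrt{-1}\,i_V\omega$ of Definition \ref{Hamhol}, and checking that the push-forward lift genuinely descends to basic $(m,0)$-forms, i.e.\ commutes with the Reeb flow.
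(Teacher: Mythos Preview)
Your proposal is correct and takes a genuinely different route from the paper's argument. The paper works with the \emph{unmodified} Hermitian metric $h_0$ on $K_X^{-1}$, whose curvature is $\rho=\Ric(\omega)$ rather than $\omega$; it identifies the push-forward Hamiltonian for $\rho$ as $-\mathrm{div}_g V$, then compares \emph{barycenters} of moment polytopes, invoking Proposition~2.3 of \cite{futakimorita85} to obtain the identity $-\int_X \mathrm{div}_g V\,\rho^m = -\int_X V(F)\,\omega^m$, and finally uses $-\Delta_F u_V=u_V$ together with $\int_X \Delta u_V\,\omega^m=0$ to rewrite this as $\int_X u_V\,\omega^m$. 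Since the two moment maps (for the same class and the same lift) differ only by a translation, equality of barycenters forces equality of polytopes.

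By contrast, you twist the metric to $h_1=e^F h_0$ so that the curvature is $\omega$ from the outset, and then compute the lift--connection difference $L_V-\nabla_V=-\Delta_F u$ pointwise. This is more direct and entirely self-contained: it avoids the external input from \cite{futakimorita85}, avoids the auxiliary hypothesis $\rho>0$ that the paper implicitly uses, and yields equality of Hamiltonians rather than merely equality of their integrals. The paper's route, on the other hand, makes visible the link between the standard moment map and the classical integral invariant computations of \cite{futakimorita85}. Your caveat about the sign/$\sqrt{-1}$ bookkeeping in the identity $L_V-\nabla_V=\sqrt{-1}\langle\mu,V\rangle$ relative to the convention $\barpartial_B u=-\sqrt{-1}\,i_V\omega$ is the only place needing care, and it is routine.
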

\begin{proof}
We first consider the case of toric Fano manifold $X$. Choose a K\"ahler form 
$\omega = \sqrt{-1} g_{i\barj} dz^i \wedge dz^\barj$
with positive Ricci form $\rho:= \Ric(\omega)$. Then $\det g$ gives a Hermitian metric on $K_X^{-1}$
and the connection form $\theta$ on the associated principal $\bfC^\ast$-bundle is given by
$\theta = \zeta^{-1} d\zeta + (\det g)^{-1} \partial \det g$ where $\zeta$ is the fiber coordinate of the
$\bfC^\ast$-bundle, and its curvature is $\rho = \sqrt{-1}\barpartial \theta$. The Hamiltonian function 
in terms of $\rho$ for an element 
$V \in \mathfrak t$ is given by $- \theta( V) = -\mathrm{div}_g V$, i.e.
minus the divergence of $V$ with respect to $g$. Thus the barycenter of the moment polytope with respect
 to the K\"ahler form $\rho$ is given by
$-\int_X \mathrm{div}_g V \rho_g^m$. By Proposition 2.3 in \cite{futakimorita85}, this is equal to
$-\int_X V(F)\ \omega^m$. The Hamiltonian function $u_V$ in terms of $\omega$ satisfies
$\Delta_F u_V = - u_V$. Thus $-\int_X V(F)\ \omega^m = \int_X u_V\ \omega^m$. 
This shows the standard moment map have the same barycenter as the one given by 
the Hamiltonian functions in \eqref{moment1}. The case of Sasaki manifolds is similar.
This completes the proof.
\end{proof}



\end{document}